\def\imod#1{\allowbreak\mkern10mu({\operator@font mod}\,\,#1)}
\newtheorem{theorem}{Theorem}[section]
\newtheorem{lemma}[theorem]{Lemma}
\newtheorem{claim}[theorem]{Claim}
\newtheorem*{thm}{Theorem}
\theoremstyle{definition}
\newtheorem{definition}[theorem]{Definition}
\newtheorem{corollary}[theorem]{Corollary}
\theoremstyle{remark}
\theoremstyle{remark}
\numberwithin{equation}{section}
    \DeclareMathOperator{\dom}{dom}
    \DeclareMathOperator{\ran}{ran}
    \DeclareMathOperator{\fix}{fix}
\newcommand{\rest}{{\upharpoonright}}
    \newcommand{\What}[1]{\widehat W_{\!\! #1}}
    \newcommand{\la}{\langle}
    \newcommand{\ra}{\rangle}
\def\Lim{{\hbox{Lim}}}
\def\fin{{\hbox{fin}}}
\def\M{{\mathcal M}}
\def\PP{{\mathbb P}}
\def\QQ{{\mathbb Q}}
\def\FF{{\mathbb F}}
\date{\today}
\begin{document}

\title[Definable maximal cofinitary groups]{Definable maximal cofinitary groups}

\author{Vera Fischer}

\address{Kurt G\"odel Research Center, University of Vienna, W\"ahringer Strasse 25, 1090 Vienna, Austria}
\email{vera.fischer@univie.ac.at}

\author{Sy David Friedman}
\address{Kurt G\"odel Research Center, University of Vienna, W\"ahringer Strasse 25, 1090 Vienna, Austria}
\email{sdf@logic.univie.ac.at}

\author{Asger T\"ornquist}

\address{Department of Mathematical Sciences, University of Copenhagen, Universitetspark 5, 2100 Copenhagen, Denmark}
\email{asgert@math.ku.dk}

\thanks{The first author would like to thank the Austrian Science Fund (FWF) for the generous support
through grant no. M1365-N13. The second author would also like to thank the Austrian Science Fund (FWF) for its generous support through
Project P 25748. The third author thanks the Danish Council for Independent Research for
generous support through grant no. 10-082689/FNU}

\subjclass[2010]{03E17;03E35}

\keywords{maximal cofinitary groups; definablity; forcing}


\begin{abstract} Using countable support iteration of $S$-proper posets, for some appropriate stationary set $S$, we obtain
a generic extension of the constructible universe, in which $\mathfrak{b}=\mathfrak{c}=\aleph_2$ and there is a maximal cofinitary group with a $\Pi^1_2$-definable set of generators.
\end{abstract}
\maketitle

\section{Introduction}

Following standard notation, we denote by $S_\infty$ the set of all permutations of the natural numbers. A function $f\in S_\infty$ is said to be {\emph{a cofinitary permutation}}, if it has only finitely many fixed points. A subgroup $\mathcal{G}$ of $S_\infty$ is said to be {\emph{a cofinitary group}}
if each of its non-identity elements has only finitely many fixed points, i.e. is a cofinitary permutation. A maximal cofinitary group, abbreviated mcg, is a cofinitary group, which is maximal with respect to these properties, under inclusion.  The minimal size of a maximal cofinitary group is denoted $\mathfrak{a}_g$. It is known that $\mathfrak{b}\leq\mathfrak{a}_g$ (see~\cite{BK}).

There has been significant interest towards the existence of maximal cofinitary groups which are low in the projective hierarchy.
The existence of a closed maximal cofinitary group is still open, while S. Gao and Y. Zhang (see~\cite{SGYZ}) showed that the axiom of constructibility implies the existence of a maximal cofinitary group with a co-analytic generating set. The result was improved by B. Kastermans, who showed that in the constructible universe $L$ there is a co-analytic maximal cofinitary group (see~\cite{BK}).

There is little known about the existence of nicely definable maximal cofinitary groups in models of $\mathfrak{c}>\aleph_1$. Our main result can be formulated as follows:

\begin{thm} There is a generic extension of the constructible universe in which $\mathfrak{b}=\mathfrak{c}=\aleph_2$ and there is a maximal cofinitary group with a $\Pi^1_2$-definable set of generators.
\end{thm}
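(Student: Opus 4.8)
The plan is to build the model by a countable support iteration $\langle \mathbb{P}_\alpha : \alpha \le \omega_2\rangle$, with iterands $\langle \dot{\mathbb{Q}}_\alpha : \alpha < \omega_2\rangle$, over $L$, constructing along the way an increasing chain $\langle \mathcal{G}_\alpha : \alpha < \omega_2\rangle$ of cofinitary groups together with generic permutations $\langle g_\alpha : \alpha < \omega_2\rangle$ so that $\mathcal{G}_{\alpha+1} = \langle \mathcal{G}_\alpha, g_\alpha\rangle$; the union $\mathcal{G} = \bigcup_{\alpha<\omega_2}\mathcal{G}_\alpha$ will be the desired mcg and $A = \{g_\alpha : \alpha < \omega_2\}$ its $\Pi^1_2$ set of generators. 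A $\diamondsuit_{\omega_2}$-style bookkeeping (available in $L$, and still usable after forcing since the iteration will be $\aleph_2$-c.c.) is used to interleave three kinds of tasks: adding generators (and diagonalizing against potential new elements, for maximality), adding dominating reals (to push $\mathfrak{b}=\mathfrak{c}=\aleph_2$), and coding (for definability). As the iterands have size $\le\aleph_1$ and the iteration is proper, $\aleph_1$ is preserved and a $\Delta$-system argument under CH gives $\aleph_2$-c.c.; since $\aleph_2$ new reals are added and no cardinals collapse, $\mathfrak{c}=\aleph_2$ in $\mathbb{P}_{\omega_2}$, and every real of the extension appears at some stage $<\omega_2$.

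For cofinitariness and maximality: at a stage devoted to a generator we force with the Zhang–Kastermans poset $\mathbb{Q}(\mathcal{G}_\alpha)$ whose conditions are pairs $(s,F)$ with $s$ a finite partial injection of $\omega$ and $F$ a finite set of reduced words in $\mathcal{G}_\alpha$ and a new letter $x$, subject to the requirement that no end-extension of $s$ produces a new fixed point of a word in $F$; this poset is $\sigma$-centered and its generic $g_\alpha$ makes $\langle\mathcal{G}_\alpha,g_\alpha\rangle$ cofinitary. Given, from the bookkeeping, a real $h$ to be handled at stage $\gamma$, if $h\notin\langle\mathcal{G}_\gamma\rangle$ and $\langle\mathcal{G}_\gamma,h\rangle$ is still cofinitary, we instead use the natural variant of $\mathbb{Q}(\mathcal{G}_\gamma)$ that additionally forces $g_\gamma(n)=h^{-1}(n)$ for infinitely many $n$; then $hg_\gamma$ has infinitely many fixed points, while genericity keeps $g_\gamma\notin\langle\mathcal{G}_\gamma\rangle$ and prevents $h$ from entering $\langle\mathcal{G}_\gamma,g_\gamma\rangle$, and an increasing-union argument shows $h\notin\langle\mathcal{G}\rangle$ with $\langle\mathcal{G},h\rangle$ not cofinitary. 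Running this against every real $h$ makes $\mathcal{G}$ maximal. For $\mathfrak{b}=\aleph_2$ we devote cofinally many stages to Hechler forcing, which is c.c.c.\ (hence proper) and adds dominating reals cofinally, forcing $\mathfrak{b}=\mathfrak{c}=\aleph_2$.

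For $\Pi^1_2$-definability we use the reshaping / almost-disjoint coding / localization paradigm. Fix in $L$ a stationary co-stationary $S\subseteq\omega_1$, to be preserved throughout. At a coding stage for $\alpha$ we view $\langle g_\beta:\beta<\alpha\rangle$ (a sequence of length $<\omega_2$, hence of size $\le\aleph_1$) together with the previously coded data as a single set $Z_\alpha\subseteq\omega_1$ via fixed bijections, force with an $S$-proper reshaping poset to make $\xi$ countable in $L_\xi[Z_\alpha\cap\xi]$ for every $\xi<\omega_1$, and then almost-disjoint-code $Z_\alpha$ into a real attached to $g_\alpha$ relative to a fixed recursive almost disjoint family. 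The point of $S$-properness is that the countable support iteration of these coding posets stays proper, preserves $\aleph_1$ and the stationarity of $S$ and of $\omega_1\setminus S$, and — the key localization lemma — in $\mathbb{P}_{\omega_2}$ there is a club $C\subseteq\omega_1$ such that for $\xi\in C$ the model $L_\xi[Z\cap\xi]$ (with $Z$ the natural limit coding set) correctly decides which of its reals are generators. Given this, one verifies
\[
x\in A \iff \text{every countable transitive } M\models\mathrm{ZF}^-\text{ with }x\in M\text{ satisfies }\varphi(x),
\]
where $\varphi(x)$ says: $x$ decodes, via the fixed almost disjoint family, a reshaped set $Z^x$ coding a sequence of permutations, and $x$ occurs in that sequence at a place consistent with the construction. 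The right-hand side is a single universal real quantifier (over codes for countable wellfounded models, a $\Pi^1_1$ side condition) followed by an arithmetic matrix, hence $\Pi^1_2$; soundness uses that the club $C$ and the genericity conditions reflect to such $M$, and completeness uses the localization lemma to refute $x\in A$ in a small correct model whenever $x\notin A$.

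The main obstacle is the preservation theorem tying the whole construction together: that the countable support iteration of the generator/diagonalization posets, the Hechler posets, and the $S$-proper reshaping/coding posets is $S$-proper and $\aleph_2$-c.c., and, crucially, preserves the statement ``the group built so far is cofinitary'' through all limit stages — so that no word in the generators acquires infinitely many fixed points in a limit, and the coding posets never add a permutation witnessing a failure of cofinitariness. This calls for isolating the right fusion/properness-type property of $\mathbb{Q}(\mathcal{G}_\alpha)$ (a condition can be extended to decide an arbitrary prescribed amount of any given word while scrupulously avoiding new fixed points) and showing it is inherited under $S$-proper countable support iteration, and for checking that the real-adding coding layer does not spoil the genericity the localization lemma relies on. A secondary difficulty is arranging the bookkeeping so that all three demands — enough generators, diagonalization against every real, and coding of the entire history — are met cofinally, which is what actually delivers maximality of $\mathcal{G}$.
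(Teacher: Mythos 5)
Your overall skeleton matches the paper's: a countable support iteration of $S$-proper posets over $L$, the Fischer--T\"ornquist/Zhang--Kastermans-type poset of pairs $(s,F)$ to add generators, generic hitting of a putative new element $h$ (so that some word in $h$ and the new generator acquires infinitely many fixed points) for maximality, Hechler forcing cofinally for $\mathfrak{b}=\mathfrak{c}=\aleph_2$, and a stationary-set/almost-disjoint coding device for definability. The divergence, and the genuine gap, is in the coding layer. You propose to code, at separate stages, the entire history $\langle g_\beta:\beta<\alpha\rangle$ as a set $Z_\alpha\subseteq\omega_1$, reshape, and then almost-disjoint-code $Z_\alpha$ ``into a real attached to $g_\alpha$.'' This breaks the intended $\Pi^1_2$ definition: your definition quantifies over countable models $M$ containing only $x$, so whatever witnesses membership must be \emph{recoverable from $x$ alone inside $M$}. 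A real merely ``attached to'' $g_\alpha$ need not belong to such an $M$, and if you instead existentially quantify over the attached code you leave $\Pi^1_2$. The paper avoids this by making $g_\alpha$ self-coding: the same poset that adds $g_\alpha$ forces $g_\alpha$ to have finite intersection with exactly the right members of a ground-model almost disjoint family $\mathcal{F}$ of bijections, and those intersection patterns (indexed by $k\in\psi[g_\alpha]$) reconstruct, inside any countable suitable $M\ni g_\alpha$, the clubs and characteristic functions witnessing that $S_{\bar\alpha+k}$ is nonstationary for the local ordinal $\bar\alpha$ coded by $X_\alpha$. Moreover only the ordinal $\alpha$ is coded, not the whole history, which sidesteps your timing problem of coding a length-$\alpha$ sequence before $g_\alpha$ exists.

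The second missing ingredient is the completeness direction of the definability equivalence: to refute $x\in A$ for a non-generator $x$, the paper needs that every $S_\beta$ with $\beta\notin\{\alpha+k:k\in\psi[g_\alpha]\}$ \emph{remains stationary} in the final model, so that a L\"owenheim--Skolem argument pins down $\psi[x]\subseteq\psi[g_\alpha]$ and hence $x=g_\alpha$. This is exactly where $S$-properness must be refined to: below a condition with first coordinate $s$, the poset $\mathbb{Q}_\alpha$ is $\omega_1\setminus\bigcup_{k\in\psi[s]}S_{\alpha+k}$-proper. Your ``localization lemma'' is named but not supplied, and nothing in your sketch guarantees that the reshaping/coding stages (which add reals and kill no designated stationary sets) cannot accidentally destroy stationarity of some unused $S_\beta$, nor that the hitting requirement for maximality is compatible with the side conditions of the coding (the paper handles this by the hypothesis that $h$ is not covered by finitely many members of $\mathcal{F}$ with indices above $\eta$, and by choosing $\eta$ accordingly). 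As written, the proposal is a plausible plan whose central definability mechanism is not yet an argument.
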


The extension is obtained via a countable support iteration of $S$-proper posets, for some appropriate stationary set $S$. Along the iteration cofinally often we add generic permutations  which using a ground model set of almost disjoint functions
provide codes for themselves. Of use for this construction is on the one hand the poset for adding a maximal cofinitary group of desired cardinality, developed in~\cite{VFAT}, and on the other hand the coding techniques of~\cite{VFSF} and~\cite{SFLZ}.

The paper is organized as follows: in section 2 we give an outline of a poset which adjoins a cofinitary permutation to a given co-fnitary group and describe our main coding techniques; section 3 contains a detailed proof of our main theorem and in section 4 we conclude with the discussion of some remaining open questions.

\section{Maximal Cofinitary Groups and Coding}


\subsection{Adding generic permutations}

Our methods for adding a generic permutation are based on~\cite{VFAT}, where the first and third authors provide a poset which given an arbitrary index set $A$ and a (freely generated) cofinitary group $G$, generically adjoins a family of permutations $\{g_a\}_{a\in A}$  such that the group generated by $G\cup\{g_a\}_{a\in A}$ is cofinitary. We will be interested in the particular case in which $|A|=1$. Following the terminology of~\cite{VFAT}, given a non-empty set $B$, a mapping $\rho: B\to S_\infty$ is said to {\emph{induce a cofinitary representation}} if the natural extension of $\rho$ to a mapping $\hat{\rho}: \FF_B\to S_\infty$, where $\FF_B$ denotes the free group on the set $B$, has the property that its image is a cofinitary group. For $A\neq\emptyset$, we denote by $W_A$ the set of all reduced words on the alphabet $A$ and by $\What{A}$ the set of all words on the same alphabet which start and end with a different letter, or are a power of a single letter. We refer to the elements of $\What{A}$ as {\emph{good words}}. Note that every word is a conjugate of a good word, that is $\forall w\in W_A\exists w_0\in \What{A}\exists u\in W_A$ such that $w=uw_0u^{-1}$. The empty word is not a good word.

Whenever  $a$ is an index, which does not belong to the set $B$, $s$ is a finite partial injection from $\omega$ to $\omega$, $\rho:B\to S_\infty$ is a mapping which induces a cofinitary representation and $w$ is a reduced word on the alphabet $\{a\}\cup B$, we denote by $e_w[s,\rho]$ the (partial) function obtained by substituting every appearance of a letter $b$ from $B$ with $\rho(b)$, and every appearance of the letter $a$ with the partial mapping $s$.  By definition, let $e_\emptyset [s,\rho]$ be the identity. For the exact recursive definition see~\cite{VFAT}. Note that if $s$ is injective, then so is $e_w[s,\rho]$ (see~\cite{VFAT}).

\begin{definition} Let $B$ be a non-empty set, $a\notin B$ and $\rho:B\to S_\infty$ a mapping which induces a cofinitary representation. The poset $\QQ_{\{a\},\rho}$ consists of all pairs $(s,F)$ where $s\in{^{<\omega}\omega}$ is a finite partial injection, $F$ is a finite set of words in $\What{\{a\}\cup B}$. The extension relation states that $(t,H)\leq (s,F)$ if and only if $t$ end-extends $s$, $F\subseteq H$ and $\forall w\in F\forall n\in\omega$ if $e_w[t,\rho](n)=n$ then $e_w[s,\rho](n)$ is already defined (and so $e_w[s,\rho](n)=n$).
\end{definition}

Recall that a poset $\PP$ is said to be $\sigma$-centered, if $\PP=\bigcup_{n\in\omega}\PP_n$ where for each $n$, $\PP_n$ is centered, that is whenever $p,q$ are conditions in $\PP_n$ then there is $r\in\PP_n$ which is their common extension.  Note that $\QQ_{\{a\},\rho}$ is $\sigma$-centered. If $G$ is $\QQ_{\{a\},\rho}$-generic, then $g=\bigcup\{s:\exists F(s,F)\in G\}$ is a cofinitary permutation such that the mapping $\rho_G:\{a\}\cup B\to S_\infty$ defined by $\rho_G(a)=g$ and $\rho_G\rest B=\rho$, induces a cofinitary representation in $V[G]$. For the proofs of both of these statements see~\cite{VFAT}.

\subsection{Coding with a ground model almost disjoint family of functions}

We work over the constructible universe $L$. Recall that a $\hbox{ZF}^-$ model $M$ is said to be \emph{suitable} iff
$$M\vDash(\omega_2\;\hbox{exists and }\omega_2=\omega_2^L).$$
In our construction, we will use a family $\mathcal{F}=\{f_{\la \zeta,\xi\ra}:\zeta\in\omega\cdot 2,\xi\in {\omega_1}^L\}\in L$ of almost disjoint bijective functions such that $\mathcal{F}\cap M=\{ f_{\la \zeta,\xi\ra}:\zeta\in\omega\cdot 2, \xi\in (\omega_1^L)^M\}$ for every transitive model
$M$ of $ZF^-$ (see~\cite[Proposition 3]{SFLZ}).

For our purposes, we will need the following Lemma, which is analogous to~\cite[Proposition 4]{SFLZ}.

\begin{lemma} There is a sequence $\bar{S}=\la S_\beta:\beta<\omega_2\ra$ of almost disjoint stationary subsets of $\omega_1$, which is $\Sigma_1$ definable over $L_{\omega_2}$ with parameter $\omega_1$, and whenever $M,N$ are suitable models of $ZF^-$ such that $\omega_1^M=\omega_1^N$, then $\bar{S}^M$ agrees with $\bar{S}^N$ on $\omega_2^M\cap \omega_2^N$.
\end{lemma}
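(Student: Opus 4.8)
The statement asks for a $\Sigma_1$-definable (over $L_{\omega_2}$, with parameter $\omega_1$) sequence $\bar S = \langle S_\beta : \beta < \omega_2\rangle$ of almost disjoint stationary subsets of $\omega_1$ that is moreover \emph{absolute} between suitable $\mathrm{ZF}^-$-models with a common $\omega_1$. The natural approach is to mimic the construction of~\cite[Proposition 4]{SFLZ}, building $\bar S$ by recursion inside $L$ and checking that the recursion is canonical enough to be both $\Sigma_1$ over $L_{\omega_2}$ and stable under passing to suitable submodels. I would first fix, once and for all, the $L$-least partition $\langle T_i : i<\omega_1\rangle$ of $\omega_1$ into $\omega_1$ many pairwise disjoint stationary sets (this exists in $L$ by the standard Ulam/Solovay argument, and ``$L$-least'' is $\Sigma_1$-expressible using the canonical well-order of $L$). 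Then I would define $S_\beta$ for $\beta < \omega_2$ by recursion on $\beta$: having defined $\langle S_\gamma : \gamma < \beta\rangle$, let $S_\beta = \bigcup\{T_{i} : i \in A_\beta\}$ where $A_\beta \subseteq \omega_1$ is chosen, using $\<\_L$ and $\beta$ coded as a subset of $\omega_1$ via the $L$-least bijection $\omega_1 \to \beta$, so that $S_\beta$ is a union of stationary pieces and is almost disjoint from every earlier $S_\gamma$. Concretely one can arrange $A_\beta$ to be a pre-chosen diagonalizing set (e.g. built from the $L$-least surjection $h_\beta : \omega_1 \to \beta$ so that $A_\beta$ picks, for each $\xi < \omega_1$, an index not yet ``used up'' by $S_{h_\beta(\xi)}\cap \xi$); the precise bookkeeping is routine and I would not spell it all out.

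\medskip

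\textbf{Why it is $\Sigma_1$ over $L_{\omega_2}$.} The key point is that the whole recursion only ever quantifies over countable objects (subsets of $\omega_1$, bijections between countable ordinals and $\omega_1$, the partition $\langle T_i\rangle$, and the well-order $\<\_L$ restricted to the relevant initial segment of $L$), together with a single recursion on ordinals below $\omega_2$. Each $S_\beta$, and indeed the relation ``$x = S_\beta$'', is computed from $\omega_1$ and a witness that is an element of $L_{\omega_2}$ (a real coding the recursion up to $\beta$, or equivalently a countable elementary submodel / the relevant $L_\alpha$ with $\alpha < \omega_2$). Hence ``$\langle S_\gamma : \gamma \le \beta\rangle$ is the sequence produced by the recursion'' is expressible by asserting the existence of such a witness with the appropriate closure properties, which is a $\Sigma_1$ assertion over $L_{\omega_2}$ with parameter $\omega_1$. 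I would phrase this exactly as in~\cite{SFLZ}: there is $\alpha$ such that $L_\alpha$ is suitable, $\beta < \omega_2^{L_\alpha}$, and $L_\alpha$ thinks $x = S_\beta$; and conversely any two such $L_\alpha$'s agree, by $\Sigma_1$-absoluteness of the recursion and condensation in $L$.

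\medskip

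\textbf{Absoluteness between suitable models.} Suppose $M, N \models \mathrm{ZF}^-$ are suitable with $\omega_1^M = \omega_1^N =: \omega_1$. Since both compute $\omega_2$ correctly as $\omega_2^L$, and since $\<\_L$, the $L$-least partition $\langle T_i\rangle$, and the $L$-least bijections $\omega_1 \to \beta$ are all absolutely defined (they are computed inside $L_{\omega_2}^L \subseteq M \cap N$ once $\omega_1$ is fixed), an easy induction on $\beta < \omega_2^M \cap \omega_2^N$ shows $S_\beta^M = S_\beta^N$: at stage $\beta$ the defining data for $S_\beta$ lives in $L_{\omega_2}$, which is a common inner part of $M$ and $N$, and the inductive hypothesis gives agreement on $\langle S_\gamma : \gamma < \beta\rangle$. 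Thus $\bar S^M$ and $\bar S^N$ agree on $\omega_2^M \cap \omega_2^N$, as required.

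\medskip

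\textbf{The main obstacle.} The conceptually delicate point — and the step I would be most careful with — is verifying that each $S_\beta$ is genuinely \emph{stationary} and that the sequence is \emph{almost disjoint}, while keeping the construction canonical. Stationarity is not automatic from being a union of the $T_i$'s once one also demands almost disjointness of $\omega_2$-many sets on $\omega_1$: one must ensure the diagonalization at stage $\beta$ removes only boundedly-in-$\omega_1$-or-countably-many blocks relative to each earlier $S_\gamma$, so that $S_\beta$ still contains a stationary block (or a stationary union of blocks). The cleanest fix is to have each $S_\beta$ be a union of $\omega_1$-many of the $T_i$'s indexed by a set $A_\beta$ that is co-bounded-on-a-club in a way engineered so that $A_\beta \cap A_\gamma$ is countable for $\gamma < \beta$ — this is exactly the combinatorial heart of~\cite[Proposition 4]{SFLZ}, and I would invoke/adapt that argument rather than reinvent it, merely checking that replacing their indexing by one running to $\omega_2$ (rather than the bound relevant there) causes no new difficulty, which it does not since $\mathsf{CH}$ holds in $L$ and there are only $\omega_2 \le 2^{\omega_1}$ requirements to meet.
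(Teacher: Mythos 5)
Your overall strategy --- building the sequence by taking unions of blocks from a fixed partition $\la T_i : i<\omega_1\ra$ of $\omega_1$ into stationary sets --- cannot produce what the lemma asks for, and the failure is exactly at the point you flag as ``the main obstacle.'' If $S_\beta=\bigcup\{T_i : i\in A_\beta\}$ and $S_\gamma=\bigcup\{T_i : i\in A_\gamma\}$ with the $T_i$ pairwise disjoint, then $S_\beta\cap S_\gamma=\bigcup\{T_i : i\in A_\beta\cap A_\gamma\}$, so as soon as $A_\beta\cap A_\gamma$ is nonempty the intersection contains some stationary $T_i$ and is neither bounded nor even nonstationary. Your proposed fix (arranging $A_\beta\cap A_\gamma$ to be countable) therefore does not yield almost disjointness in any sense usable later in the paper: the coding requires that shooting a club through the complement of $S_{\alpha+k}$ leave every other $S_\beta$ stationary, so the pairwise intersections must at least be nonstationary. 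Genuine almost disjointness under your scheme would force the index sets $A_\beta$ to be pairwise disjoint, and there is no family of $\omega_2$ pairwise disjoint nonempty subsets of $\omega_1$. So the block construction is a dead end, not a matter of routine bookkeeping.

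The paper's proof avoids recursion and partitions entirely: let $\la D_\gamma:\gamma<\omega_1\ra$ be the canonical $L_{\omega_1}$-definable $\Diamond$-sequence, for each $\alpha<\omega_2$ let $A_\alpha$ be the $L$-least subset of $\omega_1$ coding $\alpha$, and set $S_\alpha:=\{i<\omega_1 : D_i=A_\alpha\cap i\}$. Stationarity of each $S_\alpha$ is precisely the $\Diamond$ property; for $\alpha\neq\beta$ the codes $A_\alpha$ and $A_\beta$ differ below some $i_0<\omega_1$, hence $S_\alpha\cap S_\beta\subseteq i_0$ is bounded; and the $\Sigma_1$-definability over $L_{\omega_2}$ with parameter $\omega_1$, together with the agreement between suitable models sharing the same $\omega_1$, follows from the canonicity of $\la D_\gamma\ra$ and of the $L$-least codes. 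The part of your write-up concerning definability and absoluteness (witnessing the definition inside a suitable $L_\alpha$ and appealing to condensation) is essentially correct and transfers unchanged to this construction; it is only the combinatorial core that needs to be replaced by the $\Diamond$ argument.
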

\begin{proof} Let $\la D_\gamma:\gamma <\omega_1\ra$ be the canonical $L_{\omega_1}$ definable $\Diamond$ sequence (see~\cite{Devlin}) and for each $\alpha<\omega_2$ let $A_\alpha$ be the $L$-least subset of $\omega_1$ coding $\alpha$. Now, let $S_\alpha:=\{i<\omega_1: D_i=A_\alpha\cap i\}$.
\end{proof}

Let $\bar{S}$ be as in the preceding Lemma and let $S$ be a stationary subset of $\omega_1$ which is almost disjoint from every element of $\bar{S}$.  We will use the following coding of an ordinal $\alpha<\omega_2$ by a subset of $\omega_1$ (see~\cite[Fact 5]{SFLZ}).

\begin{lemma} There is a formula $\phi(x,y)$
and for every $\alpha<\omega_2^L$ a set $X_\alpha\in ([\omega_1]^{\omega_1})^L$ such that
\begin{itemize}
 \item for every suitable model $M$ containing $X_\alpha\cap\omega_1^M$, $\phi(x, X_\alpha\cap\omega_1^M)$ has a unique solution in $M$, and this solution equals
$\alpha$ provided $\omega_1=\omega_1^M$.
 \item for arbitrary suitable models $M,N$ with $\omega_1^M=\omega_1^N$ and $X_\alpha\cap \omega_1^M\in M\cap N$, the solutions of $\phi(x, X_\alpha\cap \omega_1^M)$
in $M,N$ coincide.
\end{itemize}
\end{lemma}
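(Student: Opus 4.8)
The plan is to produce $X_\alpha$ and $\phi$ by a direct coding argument inside $L$, in the spirit of~\cite[Fact 5]{SFLZ}; the sequence $\bar S$ and the set $S$ fixed above play no role in this lemma and are needed only in section~3, when such sets $X_\alpha$ are to be adjoined generically. Fix the canonical pairing function $p\colon\on\times\on\to\on$, and recall that it is absolute between transitive models of $ZF^-$, that $\max(\xi,\eta)\le p(\xi,\eta)$, and that $p[\kappa\times\kappa]=\kappa$ for every infinite cardinal $\kappa$. For $\alpha<\omega_2^L$ let $R_\alpha\in L$ be the $L$-least well-ordering of the set $\omega_1$ of order type $\omega_1+\alpha$; this exists because $|\omega_1+\alpha|=\omega_1$, and $\alpha\mapsto\omega_1+\alpha$ is an order-preserving injection of $\omega_2$ onto $[\omega_1,\omega_2)$ whose inverse is absolutely definable from $\omega_1$. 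Put
\[
X_\alpha \;=\; \{\, p(\xi,\eta) : (\xi,\eta)\in R_\alpha \,\}\subseteq\omega_1 .
\]
Then $X_\alpha\in([\omega_1]^{\omega_1})^L$, since $R_\alpha$ has $\omega_1$ pairs and $p$ is injective. Let $\phi(x,y)$ be the formula: ``the relation $R_y:=\{(\xi,\eta):p(\xi,\eta)\in y\}$ is well-founded, and, writing $\rho$ for its rank (height), $x$ is the ordinal $\beta$ with $\omega_1+\beta=\rho$ if such $\beta$ exists and $x=\rho$ otherwise.''

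Next I would check the two clauses. Let $M$ be suitable with $y:=X_\alpha\cap\omega_1^M\in M$. Since $p$ and well-foundedness of a set relation are absolute for the transitive $ZF^-$-model $M$, the relation $R_y$ computed in $M$ equals $R_\alpha\cap\{(\xi,\eta):p(\xi,\eta)<\omega_1^M\}$, which (by $\max(\xi,\eta)\le p(\xi,\eta)$) is a set relation on $\omega_1^M$ and, being a subrelation of the well-ordering $R_\alpha$, is well-founded; as $M\vDash ZF^-$ it has a rank $\rho<\omega_2^M$. Hence $\phi(x,y)$ has a unique solution $x_M$ in $M$, and $x_M$ is determined by $R_y$ (equivalently by $y$ together with $\omega_1^M$) via absolute operations. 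This gives the second clause: if $M,N$ are suitable with $\omega_1^M=\omega_1^N$ and $y\in M\cap N$, they compute the same $R_y$, the same rank (rank functions of set relations being absolute between transitive models containing the relation), hence the same solution. If moreover $\omega_1^M=\omega_1$, then $\omega_1$ is closed under $p$, so $R_y=R_\alpha$, whose rank is its order type $\omega_1+\alpha$; decoding gives $x_M=\alpha$, which is the first clause.

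The work lies not in any deep idea but in getting these design choices right, and that is where I expect the only friction. First, one uses the \emph{rank} of $R_y$ rather than its order type, because when $\omega_1^M<\omega_1$ the restriction $R_y$ need not be linear on its field, so ``order type'' might be undefined there whereas the rank always is; this keeps $\phi$ with a genuinely unique solution in every suitable $M$. Second, one takes $R_\alpha$ of order type $\omega_1+\alpha$ rather than $\alpha$: this keeps the field equal to the whole of $\omega_1$, so that $X_\alpha$ is uncountable for every $\alpha$ (including $\alpha<\omega_1$), and it supplies the absolutely computable, invertible correction used in the decoding. Finally, one must confirm that the relevant facts — absoluteness of $p$, of well-foundedness of a set relation, and of its rank function, and that $ZF^-$ together with ``$\omega_2$ exists'' proves ranks of well-founded relations on $\omega_1$ exist and are $<\omega_2$ — all hold, which they do, as they require only recursion along a set relation and the existence of $\omega_2$, not the power set axiom. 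Modulo these routine verifications the lemma follows; should a variant of $X_\alpha$ contained in a prescribed stationary set be convenient for section~3, one composes with a fixed $L$-definable injection and adapts $\phi$ accordingly, the decoding being unaffected.
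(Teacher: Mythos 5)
Your construction is correct, and the careful design choices you flag (using the rank rather than the order type so that $\phi$ has a unique solution even when $R_y$ fails to be linear on its field in a model with $\omega_1^M<\omega_1$; padding to order type $\omega_1+\alpha$ so that $X_\alpha$ is uncountable and the decoding is invertible) are exactly the points where a naive version would break. There is nothing to compare it against line by line: the paper offers no proof of this lemma at all, but simply cites \cite[Fact 5]{SFLZ}, and what you have written is a correct, self-contained implementation of that standard coding (an $L$-least well-ordering of $\omega_1$ of the appropriate type, packed into a subset of $\omega_1$ via the G\"odel pairing function, and decoded in suitable transitive $ZF^-$-models by absoluteness of well-foundedness and of the rank function, both of which need only replacement and not power set). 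The only remarks worth adding are cosmetic: uniqueness of the $\beta$ with $\omega_1+\beta=\rho$ is left cancellation for ordinal addition and deserves a word, and in the first clause one should note that the rank $\omega_1+\alpha$, hence $\alpha$ itself, automatically lies in $M$ because $M$ computes it by recursion from $R_y\in M$. You are also right that $\bar S$ and $S$ play no role here; they enter only in the forcing of Section~3.
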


\section{$\Pi^1_2$-definable set of generators}

In this section we will provide a generic extension of the constructible universe $L$ in which $\mathfrak{b}=\mathfrak{c}=\aleph_2$ and there is a maximal cofinitary group with a $\Pi^1_2$-definable set of generators. Fix a recursive bijection $\psi:\omega\times\omega\to\omega$.
Recursively define a countable support iteration of $S$-proper posets $\la \PP_\alpha,\dot{\QQ}_\beta:\alpha\leq\omega_2,\beta<\omega_2\ra$ as follows. If $\alpha <\omega_1$ let $\dot{\QQ}_\alpha$ be a $\PP_\alpha$-name for Hechler forcing for adding a dominating real.\footnote{For bookkeeping reasons it is more convenient to introduce the generators of the maximal cofinitary group at limit stages greater or equal $\omega_1$.}  Suppose $\PP_\alpha$ has been defined and
\begin{itemize}
\item for every $\beta\in \Lim(\alpha\backslash \omega_1)$ the poset $\QQ_\beta$ adds a cofinitary
permutation $g_\beta$, and
\item  the mapping $\rho_\beta: \Lim(\alpha\backslash\omega_1)\to S_\infty$  where $\rho_\alpha(\beta)=g_\beta$ induces a cofinitary representation.
\end{itemize}
In $L^{\PP_\alpha}$ define $\QQ_\alpha$ as follows. {\emph{If $\alpha$ is a successor}}, then $\QQ_\alpha$ is a $\PP_\alpha$-name for Hechler forcing for adding a dominating real. {\emph{If $\alpha\geq\omega_1$ is a limit}}, then $\alpha=\omega_1\cdot\nu+\omega\cdot\eta$ for some $\nu\neq 0$, $\nu <\omega_2$, $\eta<\omega_1$ and the conditions of $\QQ_\alpha$ are pairs $\la \la s, F,s^*\ra,\la c_k,y_k\ra_{k\in\omega}\ra$ where
\begin{enumerate}
\item $(s,F)\in \QQ_{\{\alpha\},\rho_\alpha}$;
\item $\forall k\in\omega$, $c_k$ is a closed bounded subset of $\omega_1\backslash \eta$ such that $c_k\cap S_{\alpha +k}=\emptyset$;
\item $\forall k\in\omega$, $y_k$ is a $0,1$-valued function whose domain $|y_k|$ is a countable limit ordinal, such that
$\eta\leq |y_k|$, $y_k\rest \eta=0$ and for every $\gamma$ such that $\eta\leq\gamma <|y_k|$, $y_k(2\gamma)=1$ if and only if
$\gamma\in\eta + X_\alpha=\{\eta+\mu:\mu\in X_\alpha\}$;
\item for every $k\in\psi[s]$ and every countable suitable model $M$ of $ZF^-$ such that $\xi=\omega_1^M\leq |y_k|$, $\xi$ is a limit point of $c_k$
and $y_k\rest \xi$, $c_k\cap \xi$ are elements of $M$, we have that
 $$M\vDash y_k\rest\xi\;\hbox{codes a limit ordinal}\;\bar{\alpha}\;\hbox{such that}\;S_{\bar{\alpha}+k}\;\hbox{is non-stationary}.$$
\item $s^*$ is a finite subset of $\{ f_{m,\xi}:m\in\psi[s], \xi\in c_m\}\cup\{ f_{\omega +m,\xi}: m\in\psi[s], y_m(\xi)=1\}$.
\end{enumerate}
The extension relation states that  $\bar{q}=\la\la t,H,t^*\ra, \la d_k,z_k\ra_{k\in\omega}\ra$ extends $\bar{p}=\la\la s,F,s^*\ra, \la c_k,y_k \ra_{k\in\omega}\ra$ iff
\begin{enumerate}
\item $(t,H)\leq_{\QQ_{\{\alpha\},\rho_\alpha}} (s,F)$,
\item $\forall f\in s^*$, $t\backslash s\cap f=\emptyset$,
\item $\forall k\in \psi[s]$, $d_k$ end-extends $c_k$ and $y_k\subseteq z_k$
\end{enumerate}

With this the recursive definition of $\PP_{\omega_2}$ is complete. If $\bar{p}\in\QQ_\alpha$, where $\bar{p}=\la\la s,F,s^*\ra,\la c_k,y_k\ra_{k\in\omega}\ra$
we write $\fin(\bar{p})$ for $\la s,F,s^*\ra$ and $\inf(\bar{p})$ for $\la c_k, y_k\ra_{k\in\omega}$. In particular $\fin(\bar{p})_0=s$.

\begin{lemma}\label{domain0} For every condition $\bar{p}=\la\la s, F,s^*\ra,\la c_k,y_k\ra_{k\in\omega}\ra\in\QQ_\alpha$ and every $\gamma\in\omega_1$ there exists a sequence $\la d_k, z_k\ra_{k\in\omega}$ such that $\bar{q}=\la\la s,F,s^*\ra,\la d_k, z_k\ra_{k\in\omega}\ra\in \QQ_\alpha$,
$\bar{q}\leq\bar{p}$ and for all $k\in\omega$ we have that $|z_k|,\max d_k\geq \gamma$.
\end{lemma}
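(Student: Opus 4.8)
The plan is to extend $\bar p$ componentwise, working on each coordinate $k\in\omega$ separately (there is no interaction between distinct coordinates in conditions (2)--(5) of the definition of $\QQ_\alpha$ once the finite part $\langle s,F,s^*\rangle$ is fixed, and we are not changing that finite part). Fix $k$; I will produce $d_k\supseteq c_k$ closed bounded with $\max d_k\geq\gamma$ and $z_k\supseteq y_k$ a $0,1$-valued function with $|z_k|\geq\gamma$ a countable limit ordinal, so that conditions (2)--(4) in the definition of $\QQ_\alpha$ continue to hold for the pair $(d_k,z_k)$ and so that $(d_k,z_k)$ end-extends $(c_k,y_k)$ in the sense of (3) of the extension relation. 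Since the finite part is unchanged, clause (5) (the constraint on $s^*$) and clause (2) of the extension relation are automatic, and $\fin(\bar q)=\fin(\bar p)$.

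First I would handle $d_k$. If $\max c_k\geq\gamma$ already, take $d_k=c_k$; otherwise let $d_k=c_k\cup\{\delta\}$ for a single ordinal $\delta$ with $\max(\gamma,\sup c_k)\leq\delta<\omega_1$, chosen so that $\delta\notin S_{\alpha+k}$ (possible since $S_{\alpha+k}$ is not all of $\omega_1$ — indeed $S$ was chosen almost disjoint from each $S_\beta$, so $S_{\alpha+k}$ is co-stationary, hence certainly not cofinite) and so that $\delta$ is not a limit point of $c_k$, which we can arrange since $\delta$ can be taken to be a successor ordinal above $\sup c_k$. Then $d_k$ is closed bounded, $d_k\cap S_{\alpha+k}=\emptyset$, $d_k$ end-extends $c_k$, and the only new limit points of $d_k$ are those of $c_k$, so clause (4) is not newly triggered by any $\xi$; thus (4) is preserved. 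For $z_k$, let $\lambda$ be the least countable limit ordinal with $\lambda\geq\max(|y_k|,\gamma)$ and extend $y_k$ to domain $\lambda$ by the canonical rule forced by clause (3): for $\gamma'$ with $|y_k|\leq\gamma'<\lambda$ put $z_k(2\gamma')=1$ iff $\gamma'\in\eta+X_\alpha$, and $z_k(2\gamma'+1)=0$ (say); also keep $z_k\restriction\eta=0$, which is fine as $\eta\leq|y_k|$. By construction $z_k$ satisfies clauses (2)--(3) of the definition of $\QQ_\alpha$ (in the relevant group of clauses it is (3)) and $y_k\subseteq z_k$.

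The one point needing a real argument is clause (4) for the extended $z_k$: for every suitable countable $M\models ZF^-$ with $\xi=\omega_1^M\leq|z_k|$, $\xi$ a limit point of $d_k$, and $z_k\restriction\xi,\ d_k\cap\xi\in M$, we must have $M\models$ ``$z_k\restriction\xi$ codes a limit ordinal $\bar\alpha$ with $S_{\bar\alpha+k}$ non-stationary.'' But $\xi$ is a limit point of $d_k$, and the new point $\delta$ we added is a successor (not a limit point), so $\xi$ is a limit point of $c_k$; moreover $\xi\leq\sup c_k\leq|y_k|$ when $\xi$ is a genuine new constraint... more precisely, if $\xi\leq|y_k|$ then $z_k\restriction\xi=y_k\restriction\xi$ and $d_k\cap\xi=c_k\cap\xi$, so the requirement for $(d_k,z_k)$ at $\xi$ is literally the requirement already known to hold for $(c_k,y_k)$ at $\xi$; and if $|y_k|<\xi$, then since $\xi$ is a limit point of $c_k$ we get $\sup c_k\geq\xi>|y_k|$, but $|y_k|$ was a limit ordinal and $(c_k,y_k)$ was a legitimate condition already subject to clause (4) at all limit points of $c_k$ below $|y_k|$ — here one uses that a limit point $\xi$ of $c_k$ with $\xi>|y_k|$ cannot arise for a genuine condition, or is handled by noting $z_k\restriction\xi$ still extends $y_k$ canonically and the coding is absolute between suitable models by Lemma 2.3 (the statement about $X_\alpha$) together with the agreement-of-$\bar S$ property of Lemma 2.2, so $\bar\alpha$ and the non-stationarity of $S_{\bar\alpha+k}$ are computed the same way in $M$. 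The main obstacle, then, is precisely this verification that enlarging the domain of $y_k$ — while respecting the canonical coding rule in clause (3) — does not create a new limit point $\xi$ of $d_k$ at which clause (4) fails; the resolution is that we add no new limit points to $c_k$ and the canonical extension of $y_k$ reads off exactly the same ordinal $\bar\alpha$ inside any suitable $M$, by the absoluteness in Lemmas 2.2 and 2.3.
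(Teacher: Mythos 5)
Your overall strategy --- extend each coordinate $(c_k,y_k)$ separately, add a single isolated successor point $\delta\geq\gamma$ above $\max c_k$ with $\delta\notin S_{\alpha+k}$ so that $d_k$ acquires no new limit points, and extend $y_k$ canonically according to the rule in clause (3) --- is the right one, and it is what the proof the paper points to (\cite[Lemma 1.1]{VFSF}) does; the paper itself gives no details beyond that citation. The treatment of $d_k$, of clauses (2), (3), (5), and of the extension relation is fine.

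The problem is exactly at the point you yourself flag as ``the one point needing a real argument,'' namely clause (4) for a limit point $\xi$ of $c_k$ with $|y_k|<\xi\leq|z_k|$: such $\xi$ were not constrained by clause (4) for $\bar p$ (which only quantifies over $\xi\leq|y_k|$) but are constrained for $\bar q$. Your resolution is a disjunction, and neither disjunct is established. The first --- that a limit point of $c_k$ above $|y_k|$ ``cannot arise for a genuine condition'' --- is not justified: nothing in clauses (1)--(5) forces $\max c_k\leq |y_k|$. The second --- that absoluteness via Lemmas 2.2 and 2.3 makes the statement hold in any suitable $M$ with $\omega_1^M=\xi$ --- is the right idea but is not an automatic consequence of quoting those lemmas: one must exhibit, inside $M$, a witness to the non-stationarity of $S^M_{\bar\alpha+k}$ (the natural candidate is $c_k\cap\xi$, which is unbounded in $\xi$ and lies in $M$), and for that one needs $S^M_{\bar\alpha+k}=S_{\alpha+k}\cap\xi$, which requires comparing $M$ with a reference suitable model of height $\xi$ that computes $\bar\alpha$ and $S_{\bar\alpha+k}$ ``correctly'' --- this is precisely the Mostowski-collapse argument the paper carries out in full in the proof of Lemma~\ref{preprocessed0}, where the new limit point $j$ is the trace of an elementary submodel and so such a reference model is available. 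For an arbitrary $\xi$ as above you do not get this for free, so you should either import that argument explicitly (and argue it applies at every such $\xi$), or arrange the extension so that no limit point of $d_k$ lies in the interval $(|y_k|,|z_k|]$ --- which your construction in fact already guarantees whenever $\max c_k\leq|y_k|$, but not in general. As written, the verification of clause (4), which is the only non-trivial content of the lemma, is incomplete.
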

\begin{proof} As in~\cite[Lemma 1.1]{VFSF}.
\end{proof}

\begin{lemma}\label{preprocessed0} For every $p\in\QQ_\alpha$ and every dense open set $D\subseteq\QQ_\alpha$, there is $q\leq p$ such that $\fin(q)=\fin(p)$ and for every $p_1\in D$, $p_1\leq q$ there is $p_2\in D$, $p_2\leq q$ such that $\fin(p_2)_0=\fin(p_1)_0$ and $\inf(p_2)=\inf(q)$.
\end{lemma}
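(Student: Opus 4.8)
The plan is to build $q$ by a fusion over the countably many finite partial injections extending $s:=\fin(p)_0$. First I would pass to a condition below $p$ having the same finite part but with all side conditions off the stem trivialized, namely with $c_k=\varnothing$ and $y_k$ the $\subseteq$-least legal coding function for every $k\notin\psi[s]$; this is a condition $\le p$ with the same $\fin$-part, so it is harmless to assume $p$ already has this shape. Fix an enumeration $\langle s_n:n<\omega\rangle$ of all finite partial injections end-extending $s$, and recursively construct infinite parts $\inf(p)=I_0,I_1,\dots$ with each $I_{n+1}$ end-extending $I_n$ at every coordinate, such that $\langle\fin(p),I_n\rangle\in\QQ_\alpha$ and --- this is the crucial invariant --- each $I_n$ satisfies clause~(4) at \emph{every} coordinate $k<\omega$, not merely at $k\in\psi[s]$. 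Then set $\inf(q)$ to be the coordinatewise limit (unions, with suprema adjoined at limit heights) and $q:=\langle\fin(p),\inf(q)\rangle$.

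At stage $n$, discard $s_n$ unless it is compatible with $q$ in the evident combinatorial sense, i.e.\ unless $\langle s_n,F\rangle\le\langle s,F\rangle$ holds in $\QQ_{\{\alpha\},\rho_\alpha}$ and $(s_n\setminus s)\cap f=\varnothing$ for all $f\in\fin(p)_2$; if $s_n$ fails this then no condition below $q$ can have stem $s_n$, so put $I_{n+1}=I_n$. Otherwise ask whether some $p'\in D$ with $\fin(p')_0=s_n$ satisfies $p'\le\langle\fin(p),I_n\rangle$. If not, put $I_{n+1}=I_n$; since $q\le\langle\fin(p),I_n\rangle$ --- the coordinates in $\psi[s]$ are end-extended at every stage of the fusion --- this guarantees that no $p_1\le q$ lying in $D$ has stem $s_n$. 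If so, fix such a $p'$ and replace it by a condition $r_n\in D$ with the same stem, empty $s^*$-part, and infinite part that end-extends $I_n$ at every coordinate while still satisfying (2)--(4) everywhere; put $I_{n+1}:=\inf(r_n)$ and record $F_{s_n}:=\fin(r_n)_1$. Producing $r_n$ from $p'$ is the technically delicate point: at the coordinates $p'$ leaves unrelated to $I_n$ --- in particular all coordinates outside $\psi[s_n]$, where the extension relation imposes nothing --- one simply resets to the values of $I_n$, a move to a condition $\le p'$ and so still in $D$; at the remaining coordinates one must merge the new side conditions of $p'$ with the data accumulated so far, which is carried out using Lemma~\ref{domain0} and the coding-absoluteness established above, exactly as in the club-coding bookkeeping of \cite{VFSF,SFLZ}.

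It remains to check that $q$ works. That $q$ is a condition with $q\le p$ and $\fin(q)=\fin(p)$ is a limit computation of the usual kind: one arranges that the successive heights of each $c_k$ lie in a fixed club of $\omega_1$ disjoint from $S_{\alpha+k}$ and that the coding is correctly reflected at limit heights, so that in the limit each $c_k$ stays closed, avoids $S_{\alpha+k}$, and satisfies~(4), while the remaining clauses pass to the limit coordinatewise. For the main assertion, let $p_1\in D$ with $p_1\le q$ and put $s_n:=\fin(p_1)_0$; then $s_n$ is compatible with $q$ and $p_1\le q\le\langle\fin(p),I_n\rangle$, so at stage $n$ the affirmative alternative held and a witness $r_n$ with $\fin(r_n)=\langle s_n,F_{s_n},\varnothing\rangle$ and $\inf(r_n)=I_{n+1}$ was recorded. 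Set $p_2:=\langle\langle s_n,F_{s_n},\varnothing\rangle,\inf(q)\rangle$. Since the coordinates in $\psi[s_n]$ are end-extended after stage $n$, $\inf(q)$ end-extends $I_{n+1}$ on $\psi[s_n]$, so $p_2\le r_n$ and hence $p_2\in D$ by openness of $D$; moreover $p_2$ is a condition, because $\inf(q)$ satisfies~(4) at every coordinate, and $p_2\le q$, with $\fin(p_2)_0=s_n=\fin(p_1)_0$ and $\inf(p_2)=\inf(q)$, as required. The step I expect to be the real obstacle is keeping the infinite part \emph{fully processed} along the fusion --- legal at the limit, and satisfying~(4) at all coordinates --- while forcing the successive witnesses' side conditions to cohere with the chain; this is precisely the delicate part of the club-coding machinery of \cite{VFSF,SFLZ}.
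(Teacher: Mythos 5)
There is a genuine gap, and it sits exactly where you locate ``the real obstacle'': keeping the infinite part legal at the limit. The paper's proof runs the fusion \emph{inside a countable elementary submodel} $\M\prec L_\Theta$ containing $\QQ_\alpha$, $X_\alpha$ and $D$, chosen so that $j=\M\cap\omega_1\notin\bigcup_{k\in\psi[s]}S_{\alpha+k}$; at the end one adjoins $j$ as the top point of each $c_k$ and verifies clause (4) by passing to the Mostowski collapse $\pi:\M\to\bar{\M}$ and invoking the agreement properties of the two coding lemmas of Section 2: any suitable $\M_0$ with $\omega_1^{\M_0}=j$ containing $y_k\rest j$ decodes it to $\bar{\alpha}=\pi(\alpha)$, and sees $S^{\M_0}_{\bar{\alpha}+k}=S_{\alpha+k}\cap j$ as non-stationary because $d_k$ is club in $j$ and disjoint from it. Your substitute --- ``arrange that the successive heights of each $c_k$ lie in a fixed club of $\omega_1$ disjoint from $S_{\alpha+k}$'' --- is impossible: $S_{\alpha+k}$ is stationary, so no club of $\omega_1$ avoids it. What one actually needs is that the \emph{single} new limit point $j$ avoid the finitely many relevant stationary sets, and that $j$ be of the form $\M\cap\omega_1$ for a model containing $X_\alpha$, so that the decoding in clause (4) comes out right; nothing in your construction supplies either property, and the limit of an arbitrary increasing sequence of heights will in general fail clause (4) or land in $S_{\alpha+k}$.

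The elementary submodel is also what makes your witness-absorption step work, and as written that step fails. Your witness $p'$ at stage $n$ is drawn from the whole universe, so its side conditions can have height above $\sup_n j_n$, in which case no condition with infinite part of countable height bounded by $j$ can lie below it; moreover at a coordinate $k\in\psi[s_n]\setminus\psi[s]$ the relation $p'\leq\la\fin(p),I_n\ra$ imposes no compatibility between the $k$-th side condition of $p'$ and that of $I_n$, so the two closed bounded sets may be genuinely incomparable and there is nothing to ``merge'': any $r_n\leq p'$ must end-extend $p'$ at $\psi[s_n]$, hence cannot also end-extend $I_n$ there. The paper avoids both problems at once: given an arbitrary $p_1\in D$ below the finished $q$, its finite part lies in $\M$, and $p_1$ witnesses in $L_\Theta$ that $\bar{p}_n$ and a suitable $\bar{r}_n\in\M$ have a common extension in $D$ with the prescribed stem; by elementarity such a witness $\bar{r}_{1,n}$ exists \emph{inside} $\M$, hence with side conditions bounded below $j$, and the fusion continues from $\inf(\bar{r}_{1,n})$ rather than forcing the witness to cohere with a chain fixed in advance. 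Accordingly the enumeration must range over pairs (condition in $\M$, stem), each occurring cofinally often, not merely over stems.
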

\begin{proof} Let $p=\la \la t_0, F_0, t_0^*\ra,\la d^0_k,z^0_k\ra_{k\in\omega}\ra$. Let $\M$ be a countable elementary submodel of $L_\Theta$, for $\Theta$ a sufficiently large regular cardinal, which contains $\QQ_\alpha$, $\bar{p}$, $X_\alpha$, $D$ as elements and such that $j=\mathcal{M}\cap\omega_1\notin \bigcup_{k\in\psi[t_0]} S_{\alpha+k}$.
Let $\la \bar{r}_n, s_n\ra_{n\in\omega}$ enumerate all pairs $\la \bar{r}_n, s_n\ra$ where $\bar{r}_n\in \QQ_\alpha\cap\M$, $s_n$ is a finite partial injective function from $\omega$ to $\omega$ and each pair is enumerated cofinally often. Let $\{j_n\}_{n\in\omega}$ be an increasing sequence which is cofinal in $j$. Inductively we will construct a decreasing sequence $\la \bar{p}_n\ra_{n\in\omega}\subseteq\QQ\cap\M$ such that for all $n$, $\fin(\bar{p}_n)=\fin(\bar{p})$.

Let $\bar{p}_0=\bar{p}$. Suppose $\bar{p}_n$ has been defined. If there is $\bar{r}_{1,n}\in\M\cap\QQ$ such that $\bar{r}_{1,n}\leq \bar{p}_n,\bar{r}_n$ and $\fin(\bar{r}_{1,n})=s_n$ then extend $\inf(\bar{r}_{1,n})$ to a sequence $\la d^{n+1}_k,z^{n+1}_k\ra_{k\in\omega}$ in $\M$ in such a way that for all $k\in\omega$, $\max d^{n+1}_k\geq j_n$, $|z^{n+1}_k|\geq j_n$. Then let $\bar{p}_{n+1}= \la \fin(\bar{p}_0),\la d^{n+1}_k,z^{n+1}_k\ra_{k\in\omega}\ra$. If there is no such $\bar{r}_{1,n}$, then extend $\inf(\bar{p}_n)$ to a sequence  $\la d^{n+1}_k,z^{n+1}_k\ra_{k\in\omega}$ in $\M$ such that for all $k\in\omega$, $\max d^{n+1}_k\geq j_n$, $|z^{n+1}_k|\geq j_n$. With this the inductive construction is complete. For every $k\in\omega$, let $d_k=\bigcup_{n\in\omega} d^n_k\cup\{j\}$ and $z_k=\bigcup_{n\in\omega} z^n_k$. Let $q=\la \fin(\bar{p}), \la d_k,z_k\ra_{k\in\omega}\ra$.

We will show that $q$ is indeed a condition. For this we only need to verify part $(4)$ of being a condition, since the other clauses are clear. Fix $k\in\psi[t_0]$. Let $\M_0$ be a countable suitable model of $ZF^-$ such that $\omega_1^{\M_0}=j$ and $z_k,d_k$ are elements of $\M_0$. Let $\bar{\M}$ be the Mostowski collapse of the model $\M$ and let $\pi:\M\to\bar{\M}$ be the corresponding isomorphism. Note that $j=\omega_1\cap\M=\omega_1^{\bar{\M}}$. Since $X_\alpha\in\M$ and $\M$ is an elementary submodel of $L_\Theta$, $\alpha$ is the unique solution of $\phi(x, X_\alpha)$ in $\M$. Therefore $\bar{\alpha}=\pi(\alpha)$ is the unique solution of $\phi(x, X_\alpha\cap j)=\phi(x, \pi(X_\alpha))$ in $\bar{\M}$. Note also that $S^{\bar{\M}}_{\bar{\alpha}+k}=\pi{(S_{\alpha+k})}=S_{\alpha+k}\cap j$. Since $\omega_1^{\bar{\M}}=\omega_1^{\M_0}$ and $X_\alpha\cap j\in\bar{\M}\cap\M_0$, the solutions of $\phi(x, X_\alpha\cap j)$ in $\bar{\M}$ and $\M_0$ coincide. That is, the solution of $\phi(x, X_\alpha\cap j)$ in $\M_0$ is $\bar{\alpha}$. By the properties of the sequence of stationary sets which we fixed in the ground model, we have $S^{\M_0}_{\bar{\alpha}+k}=S^{\bar{\M}}_{\bar{\alpha}+k}=\pi(S^\M_{\alpha +k})=S_{\alpha+k}\cap j$. Since $d_k\in \M_0$ and $d_k$ is unbounded in $j$, we obtain that $S^{\M_0}_{\bar{\alpha}+k}$ is not stationary in $\M_0$. Therefore $q$ is indeed a condition.

Consider an arbitrary extension $p_1=\la\fin(p_1),\inf(p_1)\ra$ of $\bar{q}$ from the dense open set $D$ and let $\fin(p_1)_0=r_1$. Then $\la r_1, F_0, t_0^*\ra\in\M$, and so for some $m$, $\bar{r}^*=\la\la r_1, F_0, t_0^*\ra, \la d^m_k,z^m_k\ra_{k\in\omega}\ra\in\QQ_\alpha\cap \M$.
Then there is some $n\geq m$ such that $s_n=r_1$, $\bar{r}_n=\bar{r}^*$. Note that $p_1\leq q,\bar{r}_n$ and so $p_1$ is a common extension of $\bar{p}_n$, $\bar{r}_n$.  By elementarity there is $\bar{r}_{1,n}\in\M\cap D$
which is a common extension of $\bar{p}_n$, $\bar{r}_n$, such that $\fin(\bar{r}_{1,n})=\la r_1=s_n, F_2, r_2^*\ra$. Let $p_2:=\la \la r_1, F_2, r_2^*\ra, \la d_k,z_k\ra_{k\in\omega}\ra$. Note that $\inf(\bar{p}_{n+1})$ extends $\inf(\bar{r}_{1,n})$ and so $p_2\leq \bar{r}_{1,n}$, which implies that $p_2\in D$.  Clearly $p_2\leq q$ and so $p_2$ is as desired.
\end{proof}

\begin{lemma}\label{generic0} Let $\M$ be a countable elementary submodel of $L_\Theta$ for sufficiently large $\Theta$ containing all relevant parameters,
$i=\M\cap \omega_1$, $\bar{p}=\la\la s,F,s^*\ra, \la d^0_k, z^0_k\ra_{k\in\omega}\ra$ an element of $\M\cap\QQ_\alpha$. If $i\notin\bigcup_{k\in \psi[s]} S_{\alpha+k}$, then there exists an $(\M, \QQ_\alpha)$-generic condition $\bar{q}\leq\bar{p}$ such that $\fin(\bar{q})=\fin(\bar{p})$.
\end{lemma}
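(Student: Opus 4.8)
The plan is to build the $(\M,\QQ_\alpha)$-generic condition $\bar q$ by a fusion-style argument, interleaving two tasks: meeting every dense set of $\QQ_\alpha$ lying in $\M$, and arranging that the infinite coordinates $\la d_k,z_k\ra_{k\in\omega}$ close up correctly at $i=\M\cap\omega_1$ so that clause $(4)$ survives. First I would enumerate, using $\M$, all dense open subsets $\la D_n\ra_{n\in\omega}$ of $\QQ_\alpha$ that belong to $\M$, together with an enumeration $\la \bar r_n, s_n\ra_{n\in\omega}$ of pairs from $\QQ_\alpha\cap\M$ (each repeated cofinally often) exactly as in the proof of Lemma~\ref{preprocessed0}, and fix an increasing sequence $\la i_n\ra_{n\in\omega}$ cofinal in $i$. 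Then I would construct a decreasing sequence $\la \bar p_n\ra_{n\in\omega}\subseteq \QQ_\alpha\cap\M$ with $\fin(\bar p_n)=\fin(\bar p)$ for all $n$, where at stage $n$ I first apply Lemma~\ref{preprocessed0} inside $\M$ to $\bar p_n$ and $D_n$ to get a ``preprocessed'' condition, then use the resulting structure to find, for the finitely many relevant candidate values of $\fin(\cdot)_0$, extensions into $D_n$ (this is where the preprocessing pays off: extensions in $D_n$ compatible with a given $s$-coordinate can be found without changing $\inf$ below $\M$'s height), and finally extend the infinite coordinates in $\M$ so that $\max d^{n+1}_k\ge i_n$ and $|z^{n+1}_k|\ge i_n$ for all $k$.

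Having done this, I would set $d_k=\bigcup_n d^n_k\cup\{i\}$ and $z_k=\bigcup_n z^n_k$ for each $k$, and let $\bar q=\la\fin(\bar p),\la d_k,z_k\ra_{k\in\omega}\ra$. The next step is to verify $\bar q\in\QQ_\alpha$, and as in Lemma~\ref{preprocessed0} the only nontrivial clause is $(4)$: for each $k\in\psi[s]$ and each countable suitable $\M_0\models ZF^-$ with $\omega_1^{\M_0}\le|z_k|$, $\omega_1^{\M_0}$ a limit point of $d_k$, and $z_k\rest\omega_1^{\M_0}, d_k\cap\omega_1^{\M_0}\in\M_0$, I must check that $\M_0$ sees $z_k\rest\omega_1^{\M_0}$ coding an ordinal $\bar\alpha$ with $S_{\bar\alpha+k}$ non-stationary. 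For $\omega_1^{\M_0}<i$ this is automatic since $z_k\rest\omega_1^{\M_0}=z^n_k\rest\omega_1^{\M_0}$ for some $n$ and $\bar p_n$ was a condition; for $\omega_1^{\M_0}=i$ I would run exactly the Mostowski-collapse argument of Lemma~\ref{preprocessed0}: collapsing $\M$ gives $\pi:\M\to\bar\M$ with $\omega_1^{\bar\M}=i$, $\pi(\alpha)=\bar\alpha$ the unique solution of $\phi(x,X_\alpha\cap i)$ in $\bar\M$, and $S^{\bar\M}_{\bar\alpha+k}=S_{\alpha+k}\cap i$; then the coherence properties of $\bar S$ (our Lemma on the $\Sigma_1$-definable stationary sequence) transfer this to $\M_0$, giving $S^{\M_0}_{\bar\alpha+k}=S_{\alpha+k}\cap i$, which is non-stationary in $\M_0$ because $d_k\in\M_0$ is unbounded in $i$ and disjoint from $S_{\alpha+k}$ (since $i\notin\bigcup_{k\in\psi[s]}S_{\alpha+k}$ and each $d^n_k\cap S_{\alpha+k}=\emptyset$). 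This is precisely the point where the hypothesis $i\notin\bigcup_{k\in\psi[s]}S_{\alpha+k}$ is used.

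Finally, $\bar q\le\bar p$ is immediate from the definitions, and genericity of $\bar q$ over $\M$ follows by the standard argument: given a dense open $D=D_n\in\M$ and any $p_1\le\bar q$, apply the preprocessing of Lemma~\ref{preprocessed0} to reduce to a condition with $\inf$ equal to $\inf(\bar q)$ restricted below $i$ and $\fin$-coordinate $\fin(p_1)_0=r_1$; then $\la r_1,\ldots\ra$ together with some $\la d^m_k,z^m_k\ra_{k\in\omega}$ is in $\QQ_\alpha\cap\M$, so for some $n\ge m$ it equals $\bar r_n$ with $s_n=r_1$, and by elementarity our stage-$n$ construction found an extension of $\bar p_n,\bar r_n$ into $D$ inside $\M$; unwinding, this yields a member of $D\cap\M$ compatible with $p_1$, as required.

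I expect the main obstacle to be bookkeeping: one must simultaneously (i) keep all $\bar p_n$ inside $\M$ with frozen $\fin$-coordinate, (ii) drive the $d_k,z_k$ to be cofinal in $i$, and (iii) ensure that for \emph{every} $p_1$ in \emph{every} dense $D\in\M$ some witness in $D\cap\M$ compatible with $p_1$ has actually been encountered — which is why the pairs $\la\bar r_n,s_n\ra$ are listed cofinally often and why Lemma~\ref{preprocessed0} is invoked first, so that compatibility of $p_1$ with a member of $D$ depends only on the finite data $\fin(p_1)_0$ and not on the tail of $\inf(p_1)$. The verification that clause $(4)$ holds at the single new point $i$ is conceptually the heart of the matter, but it is a verbatim repeat of the argument already given in Lemma~\ref{preprocessed0}, so no new ideas are needed there.
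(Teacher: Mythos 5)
Your proposal is correct and follows essentially the same route as the paper: a fusion sequence $\la\bar p_n\ra$ inside $\M$ with frozen $\fin$-coordinate, obtained by applying Lemma~\ref{preprocessed0} at each stage and pushing $\max d^n_k, |z^n_k|$ past $i_n$, then closing up with $d_k=\bigcup_n d^n_k\cup\{i\}$, verifying clause (4) at $i$ by the same Mostowski-collapse argument, and proving predensity of $D_n\cap\M$ below $\bar q$ via the preprocessing property. The only cosmetic difference is that you re-introduce the enumeration of pairs $\la\bar r_n,s_n\ra$, which is internal to the proof of Lemma~\ref{preprocessed0} and not needed once that lemma is invoked as a black box.
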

\begin{proof} Let $\{ D_n\}_{n\in\omega}$ be an enumeration of
all dense open subsets of $\QQ_\alpha$ from $\M$ and let $\{i_n\}_{n\in\omega}$ be an increasing sequence which is cofinal in $i$. Inductively, construct a sequence $\la \bar{q}_n\ra_{n\in\omega}\subseteq \M\cap\QQ_\alpha$ such that $\bar{q}_0=\bar{p}$, and
\begin{enumerate}
\item for every $n\in\omega$, $\bar{q}_{n+1}\leq\bar{q}_n$, $\fin(q_n)=\fin(\bar{p})$;
\item if $\inf(q_n)=\la d^n_k, z^n_k\ra_{k\in\omega}$  then for all $k\in\omega$, $\max d^n_k\geq i_n$, $|z^n_k|\geq i_n$;
\item for every $\bar{p}_1\in D_n$ extending $\bar{q}_n$, there is $\bar{p}_2\in D_n$ which extends $\bar{q}_n$ and such that $\fin(\bar{p}_2)_0=\fin(\bar{p}_1)_0$, $ \inf(\bar{p}_2)=\inf(\bar{q}_n)$.
\end{enumerate}

Now define a condition $\bar{q}$ such that $\fin(\bar{q})=\fin(\bar{p})$, $\inf(\bar{q})=\la d_k,z_k\ra_{k\in\omega}$ where $d_k=\bigcup_{n\in\omega} d^n_k\cup\{i\}$, $z_k=\bigcup_{n\in\omega} z^n_k$. To verify that $\bar{q}$ is indeed a condition, proceed as in the proof of $q$ being a condition from Lemma~\ref{preprocessed0}. Then $\bar{q}\leq\bar{p}$ and we will show that $\bar{q}$ is $(\M,\QQ_\alpha)$-generic. For this it is sufficient to show that for every $n\in\omega$, the set $D_n\cap\M$ is predense below $\bar{q}$. Thus fix some $n\in\omega$ and $\bar{p}_1=\la\la t_1, F_1, t^*_1\ra,\inf(\bar{p}_1)\ra$ an arbitrary extension of $\bar{q}$. Without loss of generality $\bar{p}_1\in D_n$. Since $\bar{p}_1\leq\bar{q}_n$
we obtain the existence of $F_2, t_2^*\in\M$ such that $\bar{p}_2=\la \la t_1, F_2, t^*_2\ra,\la d^n_k, z^n_k\ra_{k\in\omega}\ra\leq\bar{q}_n$ and $\bar{p}_2\in\M\cap D_n$. Then $\bar{p}_3=\la \la t_1, F_1\cup F_2, t^*_1\cup t^*_2\ra, \inf(\bar{p}_1)\ra$ is a common extension of $\bar{p}_1$ and $\bar{p}_2$.
\end{proof}

\begin{corollary}\label{nonkill0} For every $\alpha <\omega_2$, the poset $\QQ_\alpha$ is $S$-proper. Consequently, $\PP_{\omega_2}$ is $S$-proper and hence preserves cardinals. More precisely, for every condition $\bar{p}=\la\la s, F, s^*\ra,\la c_k,y_k\ra_{k\in\omega}\ra\ra\in\QQ^1_\alpha$ the poset $\{\bar{r}\in\QQ_\alpha:\bar{r}\leq\bar{p}\}$ is $\omega_1\backslash \bigcup_{n\in\psi[s]} S_{\alpha+ n}$-proper.
\end{corollary}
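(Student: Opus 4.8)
The plan is to reduce everything to Lemma~\ref{generic0}. First I would dispose of the stages at which $\QQ_\alpha$ is Hechler forcing, i.e.\ $\alpha<\omega_1$ and $\alpha$ a successor: Hechler forcing is $\sigma$-centered, hence proper, hence $S$-proper for every stationary $S\subseteq\omega_1$ (properness gives generic conditions for club many $\M\prec L_\Theta$, irrespective of where $\M\cap\omega_1$ lands). So fix a limit ordinal $\alpha\geq\omega_1$, a countable $\M\prec L_\Theta$ containing $\QQ_\alpha,\bar p,X_\alpha,\bar S,S,\alpha$ (club many such $\M$), and a condition $\bar p\in\M\cap\QQ_\alpha$ with $\fin(\bar p)_0=s$; suppose $i:=\M\cap\omega_1\in S$. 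Since $\psi[s]$ is finite and $S$ is almost disjoint from every member of $\bar S$, for each $n\in\psi[s]$ there is a club $C_n\subseteq\omega_1$ with $C_n\cap S\cap S_{\alpha+n}=\emptyset$, and by elementarity $C_n$ may be taken in $\M$; then $i\in C_n$ by the standard closure argument, whence $i\notin S_{\alpha+n}$. Hence $i\notin\bigcup_{n\in\psi[s]}S_{\alpha+n}$, and Lemma~\ref{generic0} supplies an $(\M,\QQ_\alpha)$-generic condition $\bar q\leq\bar p$ (with $\fin(\bar q)=\fin(\bar p)$). This is exactly $S$-properness of $\QQ_\alpha$. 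The ``more precisely'' clause is the same statement with the hypothesis laid bare: for a fixed $\bar p$ with $\fin(\bar p)_0=s$, Lemma~\ref{generic0} applied to an arbitrary $\M$ (containing the relevant parameters) with $\M\cap\omega_1\notin\bigcup_{n\in\psi[s]}S_{\alpha+n}$ yields an $(\M,\QQ_\alpha)$-generic extension of $\bar p$ keeping the finite part fixed; this records that the only stationary sets ``forbidden'' below $\bar p$ are the finitely many $S_{\alpha+n}$ with $n\in\psi[s]$, which is precisely what the iteration bookkeeping needs, since at coordinate $\alpha$ one first uses a generic initial segment to decide the finite part and is thereby reduced to this situation.

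For $\PP_{\omega_2}$ I would invoke the preservation theorem for countable support iterations of $S$-proper posets (in the form used in~\cite{VFSF}, ultimately going back to Shelah): since each $\dot\QQ_\beta$ is forced to be $S$-proper, so is $\PP_{\omega_2}$, and in particular $\aleph_1$ is preserved. For preservation of \emph{all} cardinals I would add the usual chain-condition argument: each $\QQ_\alpha$ has size $\leq\aleph_1$ (a condition consists of finitely many finite objects together with countably many closed bounded subsets of $\omega_1$ and countably many $0,1$-sequences of countable length), and a standard induction shows $|\PP_\alpha|\leq\aleph_1$ and $\ch$ holds in $L^{\PP_\alpha}$ for every $\alpha<\omega_2$ (using that $\PP_\alpha$ is $\aleph_1$-preserving of size $\le\aleph_1$ over a model of $\ch$, plus a nice-name count); a $\Delta$-system argument on the countable supports then yields the $\aleph_2$-chain condition for $\PP_{\omega_2}$, so cardinals $\geq\aleph_2$ are preserved as well.

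The genuinely hard part has already been done: constructing the fusion-style $(\M,\QQ_\alpha)$-generic condition and verifying, through the Mostowski collapse of $\M$, that clause~(4) of being a condition survives --- that is Lemmas~\ref{preprocessed0} and~\ref{generic0}. What remains to get right here is (i) that the hypothesis $i\notin\bigcup_{n\in\psi[s]}S_{\alpha+n}$ of Lemma~\ref{generic0} is actually met for models with $i\in S$, which is the clubs argument above and uses only almost disjointness of $S$ from $\bar S$; and (ii) that the cited $S$-proper iteration theorem genuinely applies to the present mixed iteration, whose iterands ($\QQ_\alpha$ for limit $\alpha\ge\omega_1$) are $S$-proper but not uniformly proper. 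I expect (ii) --- pinning down the precise iteration theorem and checking its hypotheses for this iteration --- to be the point demanding the most care.
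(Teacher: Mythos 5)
The paper offers no proof of this corollary, treating it as immediate from Lemma~\ref{generic0} together with the standard preservation theorem for countable support iterations of $S$-proper forcings and a chain-condition count; your proposal fills in exactly these steps (the almost-disjointness argument showing $\M\cap\omega_1\in S$ implies $\M\cap\omega_1\notin\bigcup_{n\in\psi[s]}S_{\alpha+n}$, the reduction of the Hechler stages to $\sigma$-centeredness, and the $\aleph_2$-c.c.\ argument for cardinals above $\aleph_1$) and is correct. The only point worth flagging is that for the ``more precisely'' clause one must in principle produce generic conditions for every $\bar r\le\bar p$ lying in $\M$, whose finite part may enlarge $\psi[s]$ beyond the indices named in the statement --- but this looseness is already present in the corollary's own formulation, and is resolved exactly as you indicate, by first deciding the finite part inside $\M$ (in the application, the relevant $\beta$ is forced by $\bar p$ to avoid $\alpha+\psi[g_\alpha]$ entirely, so almost-disjointness of $S_\beta$ from the $S_{\alpha+k}$ still applies).
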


\subsection{Properties of $\QQ=\QQ_\alpha$}

Throughout the subsection, let $\alpha$ be a limit ordinal such that $\omega_1\leq\alpha<\omega_2$. We study the properties of $\QQ:=\QQ_\alpha$ in $L^{\PP_\alpha}$.

\begin{claim}[Domain Extension]\label{domainext0} For every condition $\bar{p}=\la \la s, F,s^*\ra,\la c_m,y_m\ra_{m\in\omega}\ra$, natural number $n$ such that $n\notin \dom(s)$ there are co-finitely many $m\in \omega$ such that $\la\la s\cup\{(n,m)\}, F,s^*\ra, \la c_m,y_m\ra_{m\in\omega}\ra$ is a condition extending $\bar{p}$.
\end{claim}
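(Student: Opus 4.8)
The plan is to put $\bar q_m:=\la\la s\cup\{(n,m)\},F,s^*\ra,\la c_k,y_k\ra_{k\in\omega}\ra$ and to verify, for all but finitely many $m$, the five clauses defining a condition of $\QQ_\alpha$ together with the three clauses of the extension relation $\bar q_m\le\bar p$. Write $\alpha=\omega_1\cdot\nu+\omega\cdot\eta$ as in the definition; since $n\notin\dom(s)$ and $\psi$ is a bijection, $\psi[s\cup\{(n,m)\}]=\psi[s]\cup\{k^*\}$ with $k^*=k^*(m):=\psi(n,m)\notin\psi[s]$. Everything attached to the infinitary part is free of charge: $\la c_k,y_k\ra_{k\in\omega}$ is not changed, so clauses (2) and (3) persist verbatim, clause (4) persists for every $k\in\psi[s]$, clause (5) persists because $s^*$ is unchanged and $\psi[s]\subseteq\psi[s\cup\{(n,m)\}]$, and the third clause of the extension relation is immediate ($d_k=c_k$, $z_k=y_k$). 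The remaining, finitary clauses cost only finitely many $m$: $s\cup\{(n,m)\}$ is a finite partial injection as soon as $m\notin\ran(s)$, which is the relevant part of clause (1); the second clause of the extension relation, $(s\cup\{(n,m)\})\setminus s\cap f=\emptyset$ for $f\in s^*$, says $m\neq f(n)$ for the finitely many $f\in s^*$ with $n\in\dom(f)$; and the first clause, $(s\cup\{(n,m)\},F)\le_{\QQ_{\{\alpha\},\rho_\alpha}}(s,F)$, i.e. that adjoining $(n,m)$ creates no new fixed point of $e_w[\cdot,\rho_\alpha]$ for any $w\in F$, fails only for $m$ in a finite set by the counting argument of~\cite{VFAT} (for $m\notin\dom(s)\cup\ran(s)$ the pair $(n,m)$ is a leaf of every relevant evaluation, hence can be used at most once in a would-be fixed-point computation of a fixed $w\in F$, which confines $m$ to finitely many values, and $F$ is finite). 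Let $B_0$ be the union of these finitely many exceptional sets.

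What remains, and is the only point of substance, is clause (4) at the freshly activated index $k^*$: for every countable suitable $M\vDash ZF^-$ with $\xi=\omega_1^M\le|y_{k^*}|$, $\xi$ a limit point of $c_{k^*}$, and $y_{k^*}\rest\xi,\,c_{k^*}\cap\xi\in M$, one must have $M\vDash$ ``$y_{k^*}\rest\xi$ codes a limit ordinal $\bar\alpha$ with $S_{\bar\alpha+k^*}$ non-stationary''. I would establish this exactly as the fact that $q$ is a condition is established in the proof of Lemma~\ref{preprocessed0}; the point is that this property is already forced by clause (2), which $\bar p$ satisfies at every index, in particular at $k^*$. Fix such an $M$ and put $\xi=\omega_1^M$. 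By clause (3), $y_{k^*}\rest\xi$ recovers $X_\alpha\cap\xi$, so by the Lemma on $X_\alpha$ in Section~2.2 the formula $\phi(x,X_\alpha\cap\xi)$ has a unique solution $\bar\alpha<\omega_2^M$ in $M$, a limit ordinal depending only on $\alpha$ and $\xi$, so that $M\vDash$ ``$y_{k^*}\rest\xi$ codes the limit ordinal $\bar\alpha$''. Since $\xi$ is a limit point of the closed set $c_{k^*}$ we have $\xi\in c_{k^*}$, hence $\xi\notin S_{\alpha+k^*}$ by clause (2), and $c_{k^*}\cap\xi$ is a club of $\xi=\omega_1^M$ that lies in $M$ and is disjoint from $S_{\alpha+k^*}$. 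Going through a countable $\M\prec L_\Theta$ (for a large regular $\Theta$) with $\bar p,X_\alpha,\alpha$ as elements and $\M\cap\omega_1=\xi$, and its transitive collapse $\pi\colon\M\to\bar\M$, one reads off $\bar\alpha=\pi(\alpha)$, and since $\bar S$ is $\Sigma_1$-definable over $L_{\omega_2}$ with parameter $\omega_1$ and $\M\prec L_\Theta$, one gets $S^{\bar\M}_{\bar\alpha+k^*}=\pi(S_{\alpha+k^*})=S_{\alpha+k^*}\cap\xi$; by the agreement of $\bar S$, and of the $\phi$-solutions, across suitable models with the same $\omega_1$ (the two Lemmas of Section~2.2), this gives $S^{M}_{\bar\alpha+k^*}=S^{\bar\M}_{\bar\alpha+k^*}=S_{\alpha+k^*}\cap\xi$. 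Therefore $c_{k^*}\cap\xi$ is a club of $\omega_1^M$ in $M$ disjoint from $S^M_{\bar\alpha+k^*}$, i.e. $M\vDash S_{\bar\alpha+k^*}$ is non-stationary, which is clause (4) at $k^*$. Hence $\bar q_m$ is a condition extending $\bar p$ for every $m\notin B_0$, proving the claim.

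The main obstacle I expect is precisely clause (4) at $k^*$; everything else is the routine bookkeeping of finitely many forbidden values of $m$. The delicate part is that activating a new index forces a fresh run of the Mostowski-collapse matching of the coded ordinal with $\alpha$ and of the derived stationary set $S^M_{\bar\alpha+k^*}$ with $S_{\alpha+k^*}\cap\xi$, which relies on the stability of the sequence $\bar S$ and of the $\phi$-decoding across suitable models proved in Section~2.2, together with the small but essential observation that a limit point $\xi$ of the closed set $c_{k^*}$ already belongs to $c_{k^*}$, so that clause (2) simultaneously keeps $\xi$ outside $S_{\alpha+k^*}$ and supplies the club in $M$ witnessing non-stationarity.
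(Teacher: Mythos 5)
Your proposal is correct and, on the two points the paper's own proof actually addresses, identical to it: the paper simply cites \cite[Lemma 2.7]{VFAT} for the co-finite set of $m$ with $(s\cup\{(n,m)\},F)\leq_{\QQ_{\{\alpha\},\rho_\alpha}}(s,F)$, and then excludes the finitely many values $f(n)$, $f\in s^*$, to secure clause (2) of the extension relation (your $B_0$ is exactly their $I$ together with the bound $N_0=\max\{f(n):f\in s^*\}$). Where you genuinely diverge is that the paper's proof stops there and says nothing about the freshly activated index $k^*=\psi(n,m)$: since clause (4) of the definition of a condition quantifies over $k\in\psi[s]$, enlarging $s$ enlarges this set, and you are right that this is a new obligation not discharged by the published two-line argument (clause (5) only relaxes, as you note). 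Your verification of (4) at $k^*$ --- via the observation that a limit point $\xi$ of the closed set $c_{k^*}$ lies in $c_{k^*}$, so $c_{k^*}\cap\xi$ is a club of $\omega_1^M$ in $M$ disjoint from $S_{\alpha+k^*}$, combined with the Mostowski-collapse matching of $\bar\alpha$ with $\pi(\alpha)$ and of $S^M_{\bar\alpha+k^*}$ with $S_{\alpha+k^*}\cap\xi$ --- is precisely the argument the paper uses to show that $q$ is a condition in Lemma~\ref{preprocessed0}, so your route is the natural completion of theirs rather than a different one.

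One caveat on your extra step: in Lemma~\ref{preprocessed0} the ordinal $j$ is produced as $\M\cap\omega_1$ for a countable $\M\prec L_\Theta$ chosen in advance, whereas here $\xi=\omega_1^M$ is handed to you by an arbitrary suitable $M$ satisfying the hypotheses of clause (4), and such a $\xi$ need not be of the form $\M\cap\omega_1$ for an elementary submodel containing the parameters. What you really need at that point is the model-independence statements of Section 2.2 (agreement of $\bar S^M$ across suitable models with the same $\omega_1$, and agreement of the solutions of $\phi(x,X_\alpha\cap\omega_1^M)$), applied directly to $M$ and one reference model computing $S_{\alpha+k^*}\cap\xi$; this is how \cite{SFLZ} is set up, and it is the same implicit appeal the paper makes, so you are no less rigorous than the source --- but the phrase ``going through a countable $\M\prec L_\Theta$ with $\M\cap\omega_1=\xi$'' should be reformulated to avoid asserting the existence of such an $\M$ for an arbitrary given $\xi$.
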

\begin{proof} Fix $\bar{p}$, $n$ as above. By~\cite[Lemma 2.7]{VFAT} there is a co-finite set $I$ such that for all $m\in I$ $(s\cup\{(n,m)\},F)\leq_{\QQ_{\{\alpha\}, \rho_\alpha}} (s,F)$. Since $s^*$ is finite, we can define $N_0=\max\{f(n):n\in s^*\}$. Then for every $m\in I\backslash N_0$, $$\la \la s\cup\{(n,m)\}, F, s^*\ra, \la c_k, y_k\ra_{k\in\omega}\ra\leq\bar{p}.$$
\end{proof}

\begin{claim}[Range Extension]\label{rangeext0} For any condition $\bar{p}=\la \la s,F,s^*\ra,\la c_m,y_m\ra_{m\in\omega}\ra$, natural number $m\notin \ran(s)$ there are co-finitely many $n\in\omega$ such that $\la \la s\cup \{(n,m)\}, F,s^*\ra,\la c_k,y_k\ra_{k\in\omega}\ra$ is a condition, extending $\bar{p}$.
\end{claim}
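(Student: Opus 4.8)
The plan is to mirror the proof of the Domain Extension claim (Claim~\ref{domainext0}), simply interchanging the roles of domain and range. First I would fix a condition $\bar{p}=\la\la s,F,s^*\ra,\la c_k,y_k\ra_{k\in\omega}\ra$ and a natural number $m\notin\ran(s)$. The key point is that the relevant constraints on adding a new pair $(n,m)$ to $s$ are of two kinds: the cofinitary-group constraint coming from $\QQ_{\{\alpha\},\rho_\alpha}$, and the almost-disjointness constraint coming from clause~(2) of the extension relation (the finite set $s^*$ of coding functions). The other clauses (conditions~(2)--(5) of being a condition and~(3) of the extension relation) only involve the infinitary part $\la c_k,y_k\ra_{k\in\omega}$, which is left untouched, and clause~(5) asks that $s^*$ be a subset of a set depending on $\psi[s\cup\{(n,m)\}]\supseteq\psi[s]$, so it is automatically preserved as we enlarge $s$.

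Next I would invoke the symmetric analogue of~\cite[Lemma 2.7]{VFAT}: for a fixed $m\notin\ran(s)$ there are cofinitely many $n\in\omega$ with $n\notin\dom(s)$ such that $(s\cup\{(n,m)\},F)\leq_{\QQ_{\{\alpha\},\rho_\alpha}}(s,F)$. (If the cited lemma is stated only for domain extensions, I would note that the poset $\QQ_{\{\alpha\},\rho_\alpha}$ is invariant under the symmetry $s\mapsto s^{-1}$ on finite partial injections, so the range version is immediate.) Call this cofinite set $I$. Then I would handle the $s^*$-constraint exactly as in Claim~\ref{domainext0}: since $s^*$ is a finite set of functions, set $N_0=\max\{f(n):f\in s^*,\ n\in\dom(f)\cap(\dom(s)+1)\}$ — or more simply just ensure the new point $n$ of the domain is large enough that $(n,m)$ does not lie on any $f\in s^*$; since each $f\in s^*$ is a function, for each such $f$ there is at most one $n$ with $f(n)=m$, hence only finitely many $n$ are excluded. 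Thus for all but finitely many $n\in I$ we have $t\setminus s\cap f=\emptyset$ for every $f\in s^*$, where $t=s\cup\{(n,m)\}$.

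Putting this together, for cofinitely many $n\in\omega$ the triple $\la s\cup\{(n,m)\},F,s^*\ra$ satisfies $(s\cup\{(n,m)\},F)\in\QQ_{\{\alpha\},\rho_\alpha}$, and all of conditions~(2)--(5) defining $\QQ_\alpha$ hold for $\la\la s\cup\{(n,m)\},F,s^*\ra,\la c_k,y_k\ra_{k\in\omega}\ra$ since the infinitary coordinates are unchanged and $\psi[s\cup\{(n,m)\}]\supseteq\psi[s]$ makes clause~(5) still valid; hence it is a condition. The extension relation is satisfied because $(t,H)=(s\cup\{(n,m)\},F)\leq_{\QQ_{\{\alpha\},\rho_\alpha}}(s,F)$, clause~(2) holds by the choice of $n$, and clause~(3) is trivial since $c_k=d_k$, $y_k=z_k$. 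The only step requiring genuine attention is the cofinitary-group bound, i.e.\ the range-version of~\cite[Lemma 2.7]{VFAT}; everything else is bookkeeping about the finitely many functions in $s^*$ and the monotonicity of $\psi[\cdot]$. I would therefore expect no real obstacle beyond correctly citing (or transporting via the $s\mapsto s^{-1}$ symmetry) the relevant lemma from~\cite{VFAT}.
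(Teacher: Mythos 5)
Your proposal is correct and follows essentially the same route as the paper: invoke \cite[Lemma 2.7]{VFAT} to get a cofinite set of admissible $n$ for the $\QQ_{\{\alpha\},\rho_\alpha}$ part, leave the infinitary coordinates untouched, and then discard the finitely many $n$ for which $(n,m)$ lies on some $f\in s^*$. One small correction: the fact that for each $f\in s^*$ only finitely many (indeed at most one) $n$ satisfy $f(n)=m$ follows from the members of $\mathcal{F}$ being \emph{bijections} (i.e.\ injectivity), not merely from their being functions, which is exactly the point the paper's proof makes.
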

\begin{proof} Fix $\bar{p}$, $m$ as above. By~\cite[Lemma 2.7]{VFAT} there is a co-finite set $I$ such that for all $n\in I$, $(s\cup\{(n,m)\}, F)\leq_{\QQ_{\{\alpha\},\rho_\alpha}} (s,F)$. Now for every $n$, consider the set $A_n=\{f(n)\}_{f\in s^*}$. If there are infinitely many $n$ such that $m\in A_n$ then $\exists f\in s^*\exists^\infty n$ such that $f(n)=m$, which is a contradiction to $f$ being a bijection. That is $\forall^\infty n(m\notin A_n)$. Choose $N$ such that $\forall n\geq N(m\notin A_n)$. Then $\forall n\in I\backslash N(\la\la s\cup\{(n,m)\}, F, s^*\ra,\la c_k, y_k\ra_{k\in\omega}\ra)$ is an extension of $\bar{p}$ with the desired properties.
\end{proof}

The following claim is straightforward.

\begin{claim} For every $w_0\in\What{\{\alpha\}\cup\Lim(\alpha\backslash\omega_1)}$  the set $D_{w_0}=\{\bar{p}\in\QQ: w_0\in \hbox{fin}(\bar{p})_1\}$ is dense.
\end{claim}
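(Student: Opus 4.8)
The plan is to show that given any condition $\bar p = \la\la s,F,s^*\ra,\la c_m,y_m\ra_{m\in\omega}\ra \in \QQ$, we can extend it to a condition whose finite part's word-component contains $w_0$. The only component that needs adjusting is $F$, the finite set of good words; we keep $s$, $s^*$ and all the $\la c_m,y_m\ra$ fixed. So the candidate extension is $\bar q = \la\la s, F\cup\{w_0\}, s^*\ra, \la c_m,y_m\ra_{m\in\omega}\ra$, and $w_0 \in \fin(\bar q)_1$ holds by construction.

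First I would check that $\bar q$ is a condition. Conditions (1)--(5) in the definition of $\QQ_\alpha$ must be verified. Clauses (2), (3) and (4) only mention $c_m$, $y_m$ and $s$, none of which changed, so they carry over from $\bar p$. Clause (5) only constrains $s^*$ in terms of $\psi[s]$, $c_m$ and $y_m$, again unchanged. For clause (1) I need $(s, F\cup\{w_0\}) \in \QQ_{\{\alpha\},\rho_\alpha}$: here $s$ is still a finite partial injection and $F\cup\{w_0\}$ is still a finite subset of $\What{\{\alpha\}\cup\Lim(\alpha\backslash\omega_1)}$ since $w_0$ lies in that set by hypothesis. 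So $\bar q \in \QQ$.

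Next I would check $\bar q \leq \bar p$ using the extension relation. Clause (2) there, $\forall f\in s^*$, $t\backslash s\cap f=\emptyset$, is vacuous since $t=s$ gives $t\backslash s=\emptyset$; clause (3), concerning end-extension of the $c_m$ and inclusion of the $y_m$, holds with equality. For clause (1) I must verify $(s, F\cup\{w_0\}) \leq_{\QQ_{\{\alpha\},\rho_\alpha}} (s,F)$: this asks that $t=s$ end-extend $s$ (trivial), that $F\subseteq F\cup\{w_0\}$ (trivial), and that for all $w\in F$ and all $n\in\omega$, if $e_w[s,\rho_\alpha](n)=n$ then $e_w[s,\rho_\alpha](n)$ is already defined --- which is a tautology since the hypothesis already asserts it is defined. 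Thus $\bar q \leq \bar p$, and since $\bar p$ was arbitrary and $D_{w_0}$ is clearly open (adding more words preserves membership, as does any extension by the extension relation's clause (1)), $D_{w_0}$ is dense. There is no real obstacle here; the content of the claim is just that the word-component can be freely enlarged because the extension relation imposes no forward constraint when the finite injection $s$ is not grown.
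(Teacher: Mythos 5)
Your proposal is correct and is exactly the argument the paper has in mind (the paper omits the proof, labelling the claim straightforward): enlarging the word-set $F$ to $F\cup\{w_0\}$ while keeping $s$, $s^*$ and the $\la c_m,y_m\ra$ fixed yields a condition, and the extension relation's only nontrivial clause becomes a tautology when $t=s$. All the verifications you carry out are the right ones and they go through.
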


\begin{claim} Suppose $\bar{q}=\la\la s,F, s^*\ra,\la c_k, y_k\ra_{k\in\omega}\ra\Vdash_{\QQ_\alpha} e_w[\rho_G](n)=n$ for some  $w\in \What{\{\alpha\}\cup\Lim(\alpha\backslash\omega_1)}$. Then $e_w[s,\rho_\alpha](n)$ is defined and $e_w[s,\rho_\alpha](n)=n$.
\end{claim}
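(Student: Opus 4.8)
The plan is to argue by contradiction, exploiting the $\sigma$-centeredness of the permutation part of the poset together with the explicit form of the extension relation in $\QQ_{\{\alpha\},\rho_\alpha}$. Suppose $\bar q=\la\la s,F,s^*\ra,\la c_k,y_k\ra_{k\in\omega}\ra$ forces $e_w[\rho_G](n)=n$ but $e_w[s,\rho_\alpha](n)$ is \emph{not} already equal to $n$; there are two cases, either $e_w[s,\rho_\alpha](n)$ is defined and differs from $n$, or it is undefined. The first case is immediate: $e_w[t,\rho_\alpha]$ always extends $e_w[s,\rho_\alpha]$ when $t$ end-extends $s$, so no extension of $\bar q$ can force $e_w[\rho_G](n)=n$, contradicting that $\bar q$ already forces it (using that $G$ is generic and $g=\bigcup\{t:\exists H\,(t,H)\in G\}$, so $e_w[\rho_G]=\bigcup\{e_w[t,\rho_\alpha]:\exists H\,(t,H)\in G\}$). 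So we may assume $e_w[s,\rho_\alpha](n)$ is undefined.

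The main step is then to build, below $\bar q$, a condition that forces $e_w[\rho_G](n)\neq n$, contradicting $\bar q\Vdash e_w[\rho_G](n)=n$. To do this, first enlarge $F$ to contain $w$ if necessary (this is allowed and still gives an extension). Next, I would repeatedly apply the Domain Extension and Range Extension claims (Claims~\ref{domainext0} and~\ref{rangeext0}): since $e_w[s,\rho_\alpha](n)$ is undefined, the computation of $e_w$ on input $n$ gets stuck at some point where the partial injection $s$ needs to be evaluated or inverted at a value outside its domain or range. Using the two extension claims finitely many times, I can extend $s$ to a finite partial injection $t$ (keeping $F,s^*$ and the stationary/coding part $\la c_k,y_k\ra$ fixed, so that $\la\la t,F,s^*\ra,\la c_k,y_k\ra\ra$ is a genuine condition below $\bar q$) so that $e_w[t,\rho_\alpha](n)$ becomes defined; moreover, at the final extension step I have co-finitely many choices for the new pair to add to $t$, so I can choose it to guarantee $e_w[t,\rho_\alpha](n)\neq n$. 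Here I must be slightly careful: the value $e_w[t,\rho_\alpha](n)$ is determined by peeling off letters of $w$ from the outside; it suffices to ensure that at the outermost letter (say an occurrence of $a=\alpha$ or its inverse, or, if $w$ is a pure power of a group generator $\rho_\alpha(b)$, that case reduces to cofinitariness of $\rho_\alpha$ itself, handled by genericity of earlier coordinates) the relevant value is steered away from $n$, which is possible because only finitely many bad choices are excluded by Claims~\ref{domainext0}, \ref{rangeext0}, and cofinitely many by the requirement $e_w[t,\rho_\alpha](n)\neq n$.

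Having produced $\bar r=\la\la t,F\cup\{w\},s^*\ra,\la c_k,y_k\ra_{k\in\omega}\ra\leq\bar q$ with $e_w[t,\rho_\alpha](n)=m\neq n$, I observe that any further extension $\bar r'\leq\bar r$ has its permutation part $t'$ end-extending $t$, hence $e_w[t',\rho_\alpha](n)=m\neq n$; in particular the generic $g$ satisfies $e_w[\rho_G](n)=m\neq n$, so $\bar r\Vdash e_w[\rho_G](n)\neq n$. This contradicts $\bar q\Vdash e_w[\rho_G](n)=n$, completing the proof. The main obstacle is the second paragraph: correctly unwinding the recursive definition of $e_w[s,\rho_\alpha]$ from \cite{VFAT} to see exactly where the computation stalls, and checking that the finitely-many and cofinitely-many bookkeeping with the extra constraint coming from $s^*$ (clause (2) of the extension relation, $t\setminus s\cap f=\emptyset$ for $f\in s^*$) still leaves room to choose the new values — but since $s^*$ is finite this only excludes finitely many values at each step, so it does not interfere with the cofinite freedom granted by the Domain/Range Extension claims.
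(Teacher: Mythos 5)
Your argument is correct in substance, but it takes a genuinely different route from the paper. The paper's proof is a short direct argument: pass to a generic filter $G$ containing $\bar q$; since $e_w[\rho_G]=\bigcup\{e_w[t,\rho_\alpha] : \exists H\, (t,H)\in G\}$, some $\bar r\in G$ below $\bar q$ already satisfies $e_w[t,\rho_\alpha](n)=n$; then the clause in the extension relation of $\QQ_{\{\alpha\},\rho_\alpha}$ forbidding new fixed points for words in $F$ immediately yields that $e_w[s,\rho_\alpha](n)$ was defined and equal to $n$. You instead argue by contradiction, constructing below $\bar q$ a condition forcing $e_w[\rho_G](n)\neq n$ via repeated Domain/Range Extension; this works, but note two things. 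First, your manual ``steering'' at the last extension step is the delicate point: the newly added pair $(n',m')$ may be consulted at several occurrences of the letter $\alpha^{\pm 1}$ in the evaluation of $w$ on $n$, so the claim that only finitely many choices are ``bad'' needs the injectivity of the partial evaluation maps and is exactly the content of the cited Lemma 2.7 of~\cite{VFAT}, not something you get for free. Second, and more efficiently, once you have enlarged $F$ to contain $w$ (as you do), the steering is automatic: since $e_w[s,\rho_\alpha](n)$ is undefined, \emph{no} legal extension $(t,F\cup\{w\})\leq(s,F\cup\{w\})$ can have $e_w[t,\rho_\alpha](n)=n$, so any extension making the value defined already forces $e_w[\rho_G](n)\neq n$. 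What the paper's approach buys is brevity — it uses the no-new-fixed-points clause once, directly; what yours buys is that, apart from that clause's role inside the Domain/Range Extension claims, it exhibits explicitly the dense set of conditions deciding $e_w[\rho_G](n)$, which is perhaps more transparent but requires the extra bookkeeping with $s^*$ that you correctly dispose of.
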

\begin{proof} Let $G$ be $\QQ_\alpha$ generic over $L^{\PP_\alpha}$ such that $q\in G$. By definition of the extension relation there is a condition
$\bar{r}=\la\la t,H,t^*\ra,\la d_k,z_k\ra_{k\in\omega}\ra$  in $G$ such that $e_w[\rho_G](n)=e_w[t,\rho_\alpha](n)=n$. Then $(t,H)\leq_{\QQ_{\{\alpha\},\rho_\alpha}} (s,F)$ and since the extension of $\QQ_{\{\alpha\},\rho_\alpha}$ does not allow new fixed points we obtain
$e_w[s,\rho_\alpha](n)=n$.
\end{proof}

\begin{lemma} Let $G$ be $\QQ_\alpha$-generic over $L^{\PP_\alpha}$ and let $g_\alpha=\bigcup_{\bar{p}\in G} \fin(\bar{p})_0$. Then $g_\alpha$ is a cofinitary permutation and $\la g_\beta\ra_{\beta\leq \alpha}$ is a cofinitary group.
\end{lemma}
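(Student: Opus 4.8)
The plan is to decompose the statement into the two assertions it contains: first, that $g_\alpha = \bigcup_{\bar p \in G} \fin(\bar p)_0$ is a cofinitary permutation, and second, that the map $\rho_G$ extending $\rho_\alpha$ by $\alpha \mapsto g_\alpha$ induces a cofinitary representation, i.e.\ that $\langle g_\beta \rangle_{\beta \le \alpha}$ is a cofinitary group. For the first part I would argue as in the treatment of $\QQ_{\{\alpha\},\rho_\alpha}$ recalled in Section~2: the Domain Extension and Range Extension claims (Claims~\ref{domainext0} and~\ref{rangeext0}) show that for every $n \notin \dom(s)$ and every $m \notin \ran(s)$ the sets $\{\bar p : n \in \dom(\fin(\bar p)_0)\}$ and $\{\bar p : m \in \ran(\fin(\bar p)_0)\}$ are dense, so by genericity $g_\alpha$ is a total surjection; since each $\fin(\bar p)_0 = s$ is a finite partial injection and the extension relation in the $\QQ_{\{\alpha\},\rho_\alpha}$ component forces end-extension, $g_\alpha$ is a well-defined injection, hence a permutation of $\omega$.

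For the cofinitariness, the key point is that $g_\alpha$ introduces no new infinite fixed-point sets for any word. Fix $w \in \What{\{\alpha\} \cup \Lim(\alpha\backslash\omega_1)}$; by the density claim for $D_{w}$ there is $\bar p \in G$ with $w \in \fin(\bar p)_1 = F$. I claim that if $n$ is any natural number with $e_w[\rho_G](n) = n$, then already $e_w[\fin(\bar p)_0, \rho_\alpha](n) = n$: indeed, pick $\bar r = \langle\langle t, H, t^*\rangle, \langle d_k, z_k\rangle_k\rangle \in G$ below $\bar p$ with $n \in \dom(e_w[t,\rho_\alpha])$ — such $\bar r$ exists by Domain Extension applied finitely often to fill in the finitely many values of $s$ needed to evaluate $w$ at $n$ — and then the clause "$\forall w \in F\, \forall n$: if $e_w[t,\rho](n)=n$ then $e_w[s,\rho](n)$ is already defined" in the extension relation of $\QQ_{\{\alpha\},\rho_\alpha}$ forces $e_w[s,\rho_\alpha](n) = n$ whenever $e_w[t,\rho_\alpha](n)=n$, while for $n$ not moved to a fixed point by the limited partial function $t$ one uses that fixed points are never created. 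Consequently $\fix(e_w[\rho_G]) \subseteq \dom(s)$ is finite for every good word $w$.

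It remains to pass from good words to arbitrary reduced words and to conclude that the whole group is cofinitary. Every reduced word $v$ on the alphabet $\{\alpha\}\cup\Lim(\alpha\backslash\omega_1)$ is a conjugate $v = u w_0 u^{-1}$ of a good word $w_0$, as noted in Section~2; since $\hat\rho_G$ is a group homomorphism, $e_v[\rho_G] = \hat\rho_G(u)\, e_{w_0}[\rho_G]\, \hat\rho_G(u)^{-1}$ is conjugate in $S_\infty$ to $e_{w_0}[\rho_G]$, so it has the same number of fixed points, which is finite by the previous paragraph (provided $w_0$ is nonempty; if $w_0$ is empty then $v$ reduces to the identity). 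Thus every nonidentity element of the image of $\hat\rho_G$ — equivalently every nonidentity element of $\langle g_\beta \rangle_{\beta \le \alpha}$, once one checks via freeness that distinct reduced words give distinct permutations, which follows since a nontrivial relation would make some $e_v[\rho_G]$ the identity, contradicting finiteness of fixed points unless $v$ is trivial — has only finitely many fixed points. Hence $\rho_G$ induces a cofinitary representation and $\langle g_\beta \rangle_{\beta \le \alpha}$ is a cofinitary group. The main obstacle is organizing the reduction from an arbitrary word to a good word cleanly and making sure the "no new fixed points" clause of the $\QQ_{\{\alpha\},\rho_\alpha}$-part genuinely transfers through the larger poset $\QQ_\alpha$; but this is immediate because the coding coordinates $\langle c_k, y_k\rangle_k$, $s^*$ play no role in evaluating words, so the argument is essentially that of~\cite{VFAT} applied to the $\QQ_{\{\alpha\},\rho_\alpha}$-reduct.
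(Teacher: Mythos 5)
Your proposal is correct and follows essentially the same route as the paper: density of the domain/range extension sets gives that $g_\alpha$ is a permutation, density of $D_{w'}$ for good words together with the no-new-fixed-points clause of $\QQ_{\{\alpha\},\rho_\alpha}$ gives $\fix(e_{w'}[\rho_G])=\fix(e_{w'}[s,\rho_\alpha])$, and conjugation reduces arbitrary words to good ones. The only slip is the assertion $\fix(e_w[\rho_G])\subseteq\dom(s)$; the correct statement is that $\fix(e_w[s,\rho_\alpha])$ is finite because either $w$ omits the letter $\alpha$ (so cofinitariness of $\rho_\alpha$ applies) or $e_w[s,\rho_\alpha]$ has finite domain, but this does not affect the argument.
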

\begin{proof} Since for every $n,m$ in $\omega$, the sets $D_n=\{\bar{p}\in\QQ:n\in\dom(\fin(\bar{p})_0)\}$, $R_m=\{\bar{p}\in\QQ, m\in\ran(\fin(\bar{p})_0)\}$ are dense, it is easy to see than $g=g_\alpha$ is a surjective function. Injectivity follows directly from the properties of $\QQ_{\{\alpha\},\rho_\alpha}$ (see~\cite{VFAT}), and so $g$ is a permutation.

We will show that the group generated by $\{g_\beta\}_{\beta\in \Lim(\alpha\backslash \omega_1)}\cup\{g_\alpha\}$ is a cofinitary group. Fix an arbitrary word $w\in W_{\{\alpha\}\cup\Lim(\alpha\backslash \omega_1)}$. Then there are $w^\prime\in \What{\{\alpha\}\cup\Lim(\alpha\backslash \omega_1)}$ and $u\in W_{\{\alpha\}\cup\Lim(\alpha\backslash \omega_1)}$ such that $w=u^{-1}w^\prime u$. Since $D_{w^\prime}$ is dense, there is a condition $\bar{p}=\la\la s,F,s^*\ra,\la c_k,y_k\ra_{k\in\omega}\ra$ in $G$  such that $w^\prime\in F$. Suppose $e_{w^\prime}[\rho_G](n)=n$. Then there is $\bar{q}\in G$, $\bar{q}\leq\bar{p}$ such that  $\bar{q}\Vdash e_{w^\prime}[\rho_G](n)=n$. By the above Lemma,
$e_{w^\prime}[t,\rho_\alpha](n)=n$, where $\bar{q}=\la (t, F^\prime, t^*), \la d_k,z_k\ra_{k\in\omega}\ra$ and so by the extension relation $e_{w^\prime}[s,\rho_\alpha](n)=n$. Then $\fix(e_{w^\prime}[\rho_G])=\fix (e_{w^\prime}[s,\rho_\alpha])$ which is finite and so $\fix(e_w[\rho_G])$ is also finite.
\end{proof}

\begin{lemma}[Generic Hitting] In $L^{\PP_\alpha}$ suppose $\la \{h\}\cup\{g_\beta\}_{\beta<\alpha}\ra$ is a cofinitary group and $h$ is not covered by finitely many members of $\mathcal{F}$ with indices above $\eta$. Then $L^{\PP_{\alpha+1}}\vDash \exists^\infty n\in\omega(g_\alpha(n)=h(n))$.
\end{lemma}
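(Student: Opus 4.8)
The plan is to show that the set of conditions in $\QQ_\alpha$ forcing $g_\alpha(n)=h(n)$ for some $n$ larger than a given $N$ is dense below any condition; a standard genericity argument then yields $\exists^\infty n\, g_\alpha(n)=h(n)$ in $L^{\PP_{\alpha+1}}$. So fix $\bar p=\la\la s,F,s^*\ra,\la c_k,y_k\ra_{k\in\omega}\ra\in\QQ_\alpha$ and $N\in\omega$; I must produce $\bar q\leq\bar p$ and $n>N$ with $(n,h(n))\in\fin(\bar q)_0$. First I would use Domain Extension (Claim~\ref{domainext0}) in the form: for every $n\notin\dom(s)$ there are cofinitely many $m$ with $\la\la s\cup\{(n,m)\},F,s^*\ra,\la c_k,y_k\ra_{k\in\omega}\ra\leq\bar p$. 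The obstruction to simply setting $m=h(n)$ is twofold: first, we need $h(n)\notin\ran(s)$, which holds for cofinitely many $n$ since $s$ is finite; second, and crucially, the cofinite set of ``good'' values $m$ coming from Claim~\ref{domainext0} depends on $n$, and we must arrange $h(n)$ to land in it, as well as dodge the finitely many values excluded by the $s^*$-clause (5), i.e. $h(n)\neq f(n)$ for the finitely many $f\in s^*$.

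The key step is therefore to find a single $n>N$ (with $n\notin\dom(s)$, $h(n)\notin\ran(s)$) such that $h(n)$ avoids all the finitely many forbidden values and lies in the cofinite ``allowed'' set. Examining \cite[Lemma 2.7]{VFAT} (invoked in the proof of Claim~\ref{domainext0}), the bad values of $m$ for a given $n$ are exactly those for which some word $w\in F$ evaluated with $s\cup\{(n,m)\}$ would create a new fixed point; for each $w\in F$ this constrains $m$ to lie in a set of size bounded by $\length(w)$, coming from finitely many equations of the form $e_{w'}[s,\rho_\alpha](\cdot)$ passing through the new pair $(n,m)$. The point is that this exclusion is \emph{bounded uniformly in $n$}: there is a fixed $K=K(F,s,\rho_\alpha)$ so that for each $n\notin\dom(s)$ at most $K$ values of $m$ are excluded by clause $(1)$ of the extension relation, plus the $|s^*|$ values $\{f(n):f\in s^*\}$ excluded by clause $(2)$. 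Since $h$ together with $\{g_\beta\}_{\beta<\alpha}$ generates a cofinitary group, for any word $w$ in these generators involving $h$ at least once, $e_w[\{h\}\cup\rho_\alpha]$ has finitely many fixed points; this is what prevents $h$ from being ``trapped'' by the finitely many equations. Concretely, I would argue that the set of $n$ for which $h(n)$ is a forbidden value is finite: if it were infinite, then for some fixed $w\in F$ and some fixed ``shape'' of the trapping equation, infinitely many $n$ would satisfy a relation of the form $e_{w_1}[\rho_\alpha](h(e_{w_2}[\rho_\alpha](n)))=\dots$, which unwinds to $e_u[\{h\}\cup\rho_\alpha](n)=n$ for a nonempty reduced word $u$ using $h$ — contradicting cofinitariness of the group $\la\{h\}\cup\{g_\beta\}_{\beta<\alpha}\ra$. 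For the $s^*$-clause, $h(n)=f(n)$ for infinitely many $n$ is impossible for each of the finitely many $f\in s^*$: this is exactly the hypothesis that $h$ is not covered by finitely many members of $\mathcal F$ with indices above $\eta$, since every $f\in s^*$ has index $f_{m,\xi}$ or $f_{\omega+m,\xi}$ with $m\in\psi[s]$ and (by clauses (2),(3) of being a condition) $\xi\geq\eta$.

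Thus the set of ``bad'' $n$ is finite, so choose $n>N$ that is not bad, not in $\dom(s)$, with $h(n)\notin\ran(s)$, and set $\bar q=\la\la s\cup\{(n,h(n))\},F,s^*\ra,\la c_k,y_k\ra_{k\in\omega}\ra$; the verification that $\bar q\in\QQ_\alpha$ and $\bar q\leq\bar p$ is now immediate from Claims~\ref{domainext0} and~\ref{rangeext0} together with the choice of $n$ (clauses (3),(4) of being a condition are untouched since $\inf(\bar q)=\inf(\bar p)$ and $\psi[s\cup\{(n,h(n))\}]\supseteq\psi[s]$ forces us to also check $k\in\psi[s\cup\{(n,h(n))\}]\setminus\psi[s]$ — but for such new $k$ there is no constraint from $s^*$ since we have not enlarged $s^*$, and clause (4) ranges over $k\in\psi[s]$ of the \emph{final} condition, so one must additionally thin out using Lemma~\ref{domain0} or simply note the new $y_k,c_k$ from $\bar p$ already satisfy it — I would handle this bookkeeping by first applying Lemma~\ref{domain0} to $\bar p$ if needed). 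Then $\bar q\Vdash g_\alpha(n)=h(n)$ with $n>N$, and since $N$ was arbitrary, by genericity $L^{\PP_{\alpha+1}}\vDash\exists^\infty n\, g_\alpha(n)=h(n)$.

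The main obstacle I anticipate is the ``uniform bound'' claim: showing rigorously that, as $n$ ranges over $\omega\setminus\dom(s)$, the exclusion of $m=h(n)$ by the word-clause happens only for finitely many $n$. This requires carefully unwinding how a new fixed point of $e_w[s\cup\{(n,h(n))\},\rho_\alpha]$ translates into a fixed-point equation for a word in the \emph{extended} group $\la\{h\}\cup\{g_\beta\}_{\beta<\alpha}\ra$ — the bookkeeping about which occurrences of the letter $\alpha$ in $w$ get substituted by the new pair, and reducing the resulting word to a good word — and then invoking the cofinitary hypothesis on that group. This is essentially the ``diagonalization'' step at the heart of all such maximality constructions, analogous to the genericity arguments in \cite{VFAT} and \cite{VFSF}.
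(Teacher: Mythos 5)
Your proposal is correct and follows essentially the same route as the paper: show that for each $N$ the set of conditions deciding $g_\alpha(n)=h(n)$ for some $n\geq N$ is dense, by combining (a) the fact that for all but finitely many $n$ the pair $(n,h(n))$ can be added to $s$ without creating new fixed points for words in $F$ — which the paper simply cites as \cite[Lemma 2.19]{VFAT} and you re-derive from cofinitariness of $\la\{h\}\cup\{g_\beta\}_{\beta<\alpha}\ra$ — with (b) the non-coverage hypothesis to find infinitely many $n$ with $h(n)\notin\{f(n):f\in s^*\}$. The only nitpick is that non-coverage gives simultaneous avoidance of $\{f(n):f\in s^*\}$ for infinitely many $n$, not almost-disjointness of $h$ from each individual $f\in s^*$ as you phrase it, but that weaker statement is all that is needed and is exactly what the paper uses.
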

\begin{proof} We claim that for every $N\in\omega$, the set $D_N=\{\bar{q}:\exists n\geq N(s(n)=h(n))\}$ is dense in $\QQ_\alpha$. Let $\bar{p}=\la\la s,F,s^*\ra,\la c_k,y_k\ra_{k\in\omega}\ra$ be an arbitrary condition. By~\cite[Lemma 2.19]{VFAT} there is $N$ such that for all $n\geq N$,
$$(s\cup\{(n, h(n))\},F)\leq_{\QQ_{\{\alpha\},\rho_\alpha}} (s,F).$$
Since $h$ is not covered by the members of $s^*$, we have that  $\exists^\infty n$ such that $h(n)\notin \{f(n)\}_{f\in s^*}$.
Denote this set $I_h(\bar{p})$.  Let $n\in I_h(\bar{p})\backslash \max\{ N_{\bar{p}}, N\}$. Then
$$\bar{q}:=\la \la s\cup\{(n, h(n))\}, F, s^*\ra,\la c_k,y_k\ra_{k\in\omega}\ra\leq \bar{p}$$
and $\bar{q}\in D_N$. Therefore $L^{\PP_{\alpha+1}}\vDash \exists^\infty n (g_\alpha(n)=h(n))$.
\end{proof}

\begin{lemma} The group $\mathcal{G}:=\la g_\alpha\ra_{\alpha\in\Lim(\omega_2\backslash\omega_1)}$ added by $\PP_{\omega_2}$ is a maximal cofinitary group.
\end{lemma}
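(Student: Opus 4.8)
The plan is to observe first that $\mathcal{G}$ is a cofinitary group, and then to prove maximality in the usual way, by showing that every permutation $h\in S_\infty$ of the final model for which $\la\{h\}\cup\mathcal{G}\ra$ remains cofinitary already belongs to $\mathcal{G}$. That $\mathcal{G}$ is a cofinitary group is immediate from the preceding lemmas: a nontrivial element of $\mathcal{G}$ is $e_w[\rho_G]$ for a reduced word $w$ involving only finitely many of the generators, hence lies in $\la g_\beta\ra_{\beta\le\gamma}$ for some limit $\gamma$ with $\omega_1\le\gamma<\omega_2$; by the lemma above that group is cofinitary in $L^{\PP_{\gamma+1}}$, and since ``$x$ has only finitely many fixed points'' is arithmetic in $x$, it is still cofinitary in $L^{\PP_{\omega_2}}$. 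As $\mathcal{G}$ is the increasing union of these groups over a cofinal family of limit stages, it is a cofinitary group.

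Now suppose toward a contradiction that $h\in S_\infty\cap L^{\PP_{\omega_2}}$ is such that $\la\{h\}\cup\mathcal{G}\ra$ is cofinitary but $h\notin\mathcal{G}$. Since $\PP_{\omega_2}$ is a countable support iteration of proper posets of size $\le\aleph_1$ over $L$ (the coding posets $\QQ_\alpha$ have size $\aleph_1$, computed with the $\mathrm{CH}$ that holds at intermediate stages), it has the $\aleph_2$-chain condition, so the real $h$ lies in $L^{\PP_{\alpha_0}}$ for some $\alpha_0<\omega_2$. I next claim there is a countable ordinal $\eta$ such that $h$ is not covered by finitely many members of $\mathcal{F}$ with indices above $\eta$. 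If $h$ is not covered by finitely many members of $\mathcal{F}$ at all, any $\eta$ works. Otherwise $h\subseteq f_{\la\zeta_1,\xi_1\ra}\cup\dots\cup f_{\la\zeta_j,\xi_j\ra}$ with each $\xi_i<\omega_1$, and we set $\eta=\sup_{i\le j}(\xi_i+1)<\omega_1$. Were $h$ also covered by members $f_{\la\zeta'_1,\xi'_1\ra},\dots,f_{\la\zeta'_l,\xi'_l\ra}$ all of whose indices exceed $\eta$, then, since $\omega$ is the union of the finitely many sets $\{n:h(n)=f_{\la\zeta_i,\xi_i\ra}(n)\}$, one of these, say for $i=i_0$, is infinite; it in turn is the union of the finitely many sets $\{n:h(n)=f_{\la\zeta'_r,\xi'_r\ra}(n)\}$, so some intersection is infinite, and on it $f_{\la\zeta_{i_0},\xi_{i_0}\ra}$ and $f_{\la\zeta'_r,\xi'_r\ra}$ agree; as $\xi_{i_0}<\eta<\xi'_r$ these are distinct members of the almost disjoint family $\mathcal{F}$, a contradiction.

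Fix $\eta$ as in the claim and a limit ordinal $\alpha=\omega_1\cdot\nu+\omega\cdot\eta\ge\alpha_0$, so that $h\in L^{\PP_\alpha}$. Since $\la\{h\}\cup\{g_\beta\}_{\beta<\alpha}\ra$ is a subgroup of the cofinitary group $\la\{h\}\cup\mathcal{G}\ra$, and cofinitariness of the group generated by a fixed set of reals is absolute, $\la\{h\}\cup\{g_\beta\}_{\beta<\alpha}\ra$ is cofinitary in $L^{\PP_\alpha}$; moreover ``$h$ is not covered by finitely many members of $\mathcal{F}$ with indices above $\eta$'' reflects down to $L^{\PP_\alpha}$, since a finite covering family would be a finite tuple of countable ordinals and so already lie in $L$. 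Thus the hypotheses of the Generic Hitting lemma are met at stage $\alpha$, giving $L^{\PP_{\alpha+1}}\vDash\exists^\infty n\,(g_\alpha(n)=h(n))$; as this is arithmetic in $g_\alpha,h\in L^{\PP_{\alpha+1}}$, it holds in $L^{\PP_{\omega_2}}$ as well. Then $h^{-1}g_\alpha\in\la\{h\}\cup\mathcal{G}\ra$ has infinitely many fixed points, so cofinitariness forces $h^{-1}g_\alpha=\id$, i.e.\ $h=g_\alpha\in\mathcal{G}$, contradicting $h\notin\mathcal{G}$. Hence $\mathcal{G}$ is a maximal cofinitary group.

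I expect the crux to be the construction of $\eta$ and the verification that it yields exactly the non-covering hypothesis of the Generic Hitting lemma; this is where one uses that $\mathcal{F}$ is almost disjoint and that any finite subfamily of $\mathcal{F}$ covering $h$ has second coordinates bounded below $\omega_1$. The surrounding points --- that $h$ appears at an intermediate stage, and that cofinitariness and the non-covering property are absolute between $L^{\PP_\alpha}$ and $L^{\PP_{\omega_2}}$ --- are routine but should be spelled out carefully.
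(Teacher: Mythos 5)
Your proof is correct and follows essentially the same route as the paper: locate a stage $\alpha$ of the iteration where $h$ already appears, find a countable $\eta$ below which any finite covering subfamily of $\mathcal{F}$ must have its second indices, and invoke the Generic Hitting lemma at the appropriate limit stage to produce a $g_\alpha$ agreeing with $h$ infinitely often, contradicting cofinitariness. The only difference is that you spell out details the paper leaves implicit (the $\aleph_2$-c.c.\ reflection of $h$ to an intermediate model, the almost-disjointness argument producing $\eta$, and the absoluteness of cofinitariness and of non-covering), all of which are handled correctly.
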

\begin{proof} Suppose $\mathcal{G}$ is not maximal. Then there is a cofinitary permutation $h$ such that
$$\la\{g_\alpha\}_{\alpha\in\Lim(\omega_2\backslash\omega_1)}\cup \{h\}\ra$$ is cofinitary. Let $\alpha<\omega_2$ be the least limit ordinal such that $\alpha=\omega_1\cdot\xi$ for some $\xi\neq 0$ and such that $h\in L^{\PP_\alpha}$. Then there is $\eta\geq 0$ such that $h$ is not covered by finitely many members of $\mathcal{F}$ whose second index is above $\eta$. Therefore by the Generic Hitting Lemma the poset $\QQ_{\omega_1\cdot\xi+\omega\cdot\eta}$ adds a generic permutation $g_{\omega_1\cdot\xi+\omega\cdot\eta}$ which is infinitely often equal to $h$, which is a contradiction.
\end{proof}

\subsection{Coding}

Let $G_\alpha$ be $\QQ_\alpha$-generic filter over $L^{\PP_{\alpha}}$ and let $g_\alpha:=\bigcup_{\bar{p}\in G} \hbox{fin}(\bar{p})_0$. For every $k\in\psi[g_\alpha]$ define $Y^\alpha_k:=\bigcup_{\bar{p}\in G_\alpha}\hbox{inf}(\bar{p})_1$,
$C^\alpha_k:=\bigcup_{\bar{p}\in G_\alpha} \hbox{inf}(\bar{p})_0$ and  $S^*:=\bigcup_{\bar{p}\in G_\alpha} \hbox{fin}(\bar{p})_2$. Let $G:=G_{\omega_2}$.

The following is clear using easy extendibility arguments together with Lemmas~\ref{domain0},~\ref{domainext0},~\ref{rangeext0}.

\begin{lemma} The sets $Y^\alpha_k$, $C^\alpha_k$, and $S^*$ have the following properties:
 \begin{itemize}
  \item $S^*=\{f_{\la m,\xi\ra}: m\in\psi[g_\alpha], \xi\in C^\alpha_m\}\cup\{ f_{\la\omega+m,\xi\ra}: m\in \psi[g_\alpha], Y^\alpha_m(\xi)=1\}$.
  \item If  $m\in\psi[g_\alpha]$ then $\dom(Y^\alpha_m)=\omega_1$ and $C^\alpha_m$ is a club in $\omega_1$ disjoint from $S_{\alpha+m}$.
  \item If $m\in\psi[g_\alpha]$ then $|g_\alpha\cap f_{\la m,\xi\ra}|<\omega$ if and only if $\xi\in C^\alpha_m$.
  \item If $m\in \psi[g_\alpha]$ then $|g_\alpha\cap f_{\la \omega+m,\xi\ra}|<\omega$ if and only if $Y^\alpha_m(\xi)=1$.
 \end{itemize}
\end{lemma}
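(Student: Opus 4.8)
The plan is to verify each of the four bulleted items by combining a density argument with the definition of the forcing $\QQ_\alpha$ and the filter $G_\alpha$. For the first item, I would fix $m\in\psi[g_\alpha]$ and $\xi$; by definition $f_{\la m,\xi\ra}\in S^*$ means it appears in $\fin(\bar p)_2$ for some $\bar p\in G_\alpha$, which by clause (5) of the definition of a condition forces $\xi\in c_m$ for that $\bar p$, hence $\xi\in C^\alpha_m$; the converse inclusion follows from a density argument showing that for any $\xi\in C^\alpha_m$ one can extend into $G_\alpha$ to put $f_{\la m,\xi\ra}$ into the third coordinate (using that clause (5) only requires $s^*$ to be a \emph{finite} subset, so adding one more function is always possible when $\xi$ already lies in $c_m$). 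The $f_{\la\omega+m,\xi\ra}$ half is entirely parallel, using $y_m(\xi)=1$ in place of $\xi\in c_m$.

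For the second item I would argue that $\dom(Y^\alpha_m)=\omega_1$ and $\max(C^\alpha_m)$ unbounded in $\omega_1$ both follow from Lemma~\ref{domain0}: for every $\gamma<\omega_1$ the set of conditions $\bar p$ with $|z_k|,\max d_k\ge\gamma$ for all $k$ is dense, so genericity gives $\dom(Y^\alpha_m)\supseteq\gamma$ and $C^\alpha_m\cap\gamma\ne\emptyset$ above any bound. That $C^\alpha_m$ is closed is immediate since each $c_k$ in a condition is closed bounded and the extension relation requires end-extension, so the union is closed; that it is disjoint from $S_{\alpha+m}$ follows from clause (2), $c_k\cap S_{\alpha+k}=\emptyset$, which is preserved under unions. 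Finally $C^\alpha_m$ is a club and $\dom(Y^\alpha_m)=\omega_1$.

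For the third item, the direction $\xi\in C^\alpha_m\Rightarrow |g_\alpha\cap f_{\la m,\xi\ra}|<\omega$ follows from clause (2) of the \emph{extension} relation: once some $\bar p\in G_\alpha$ has $\xi\in c_m$, we may (by the first bullet) assume $f_{\la m,\xi\ra}\in s^*$, and then every further extension $\bar q\le\bar p$ satisfies $(t\setminus s)\cap f_{\la m,\xi\ra}=\emptyset$, so $g_\alpha$ agrees with $f_{\la m,\xi\ra}$ only on the finite set $\dom(s)$. Conversely, if $\xi\notin C^\alpha_m$, then by clause (2) of being a condition, $\xi\notin c_k$ for every $\bar p\in G_\alpha$, so no condition in $G_\alpha$ ever blocks agreement with $f_{\la m,\xi\ra}$ along that coordinate; a Domain Extension argument (Claim~\ref{domainext0}) combined with the fact that $f_{\la m,\xi\ra}$ is a bijection — so its graph meets co-finitely many of the available target values at each fresh $n\in\psi^{-1}(\{m\}\times\omega)$ outside $\dom(s)$ — shows that the set $\{\bar p:\exists n\ge N,\ s(n)=f_{\la m,\xi\ra}(n)\}$ is dense for each $N$, whence $|g_\alpha\cap f_{\la m,\xi\ra}|=\omega$. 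The key point to check carefully is that we can always find such $n$ in the right residue class of $\psi$ while keeping clause (5) satisfiable, i.e.\ that enlarging $s$ only finitely shifts the constraint and the $s^*$ already present in the condition (being finite, hence covering $f_{\la m,\xi\ra}$ on only finitely many points) does not obstruct infinitely many hits. The fourth item is proved identically with $Y^\alpha_m$ and clause (3) in place of $C^\alpha_m$ and clause (2), again using that the relevant $f_{\la\omega+m,\xi\ra}$ is a bijection. I expect the main obstacle to be the converse directions of the third and fourth bullets, where one must combine the Domain and Range Extension claims with a bijectivity-counting argument to certify density of the ``hitting'' sets while simultaneously respecting the coding clauses (4) and (5).
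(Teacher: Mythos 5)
Your handling of the first two bullets and of the implications $\xi\in C^\alpha_m\Rightarrow|g_\alpha\cap f_{\la m,\xi\ra}|<\omega$ (and the analogue for $Y^\alpha_m$) is correct, and it is exactly the kind of ``easy extendibility argument'' the paper intends --- the paper itself offers nothing beyond a citation of Lemmas~\ref{domain0}, \ref{domainext0} and~\ref{rangeext0}. The genuine problem is the converse implication in the last two bullets, which you rightly flag as the main obstacle but do not actually overcome. Claim~\ref{domainext0} says that for each \emph{fixed} $n\notin\dom(s)$ all but finitely many targets $m'$ give a legal extension $(s\cup\{(n,m')\},F)$; it does not say that for infinitely many $n$ the one \emph{specific} target $f_{\la m,\xi\ra}(n)$ is legal, since the excluded finite set of targets varies with $n$. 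Concretely, if the good word $\alpha\beta^{-1}$ lies in $F$, then $(s\cup\{(n,f_{\la m,\xi\ra}(n))\},F)$ fails to extend $(s,F)$ whenever $f_{\la m,\xi\ra}(n)=g_\beta(n)$, and this agreement set may well be infinite (that is precisely what generic hitting arranges for earlier generators); nothing in Claims~\ref{domainext0} or~\ref{rangeext0}, nor in the bijectivity of $f_{\la m,\xi\ra}$, rules out that the bad $n$'s become cofinite once one intersects over all $w\in F$. What is needed is the analogue of the Generic Hitting Lemma, i.e.\ [VFAT, Lemma~2.19] applied to $h=f_{\la m,\xi\ra}$, together with a verification that this ground-model bijection satisfies the hypothesis of that lemma relative to $\rho_\alpha$.

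A second, smaller point: the set $D_N=\{\bar q:\exists n\ge N\,(s_q(n)=f_{\la m,\xi\ra}(n))\}$ is not dense in $\QQ_\alpha$ --- below a condition with $f_{\la m,\xi\ra}\in s^*$ and $N>\max\dom(s)$ it has no extensions at all. One must either show $D_N$ is dense below a suitable condition of $G_\alpha$ witnessing $\xi\notin C^\alpha_m$ (for instance one with $\max c_m>\xi$ and $\xi\notin c_m$, so that no later extension can ever place $f_{\la m,\xi\ra}$ into $s^*$), or replace $D_N$ by the predense set $D_N\cup\{\bar q:\xi\in c^{\bar q}_m\}$. This repair is routine, but it needs to be stated; your phrasing ``no condition in $G_\alpha$ ever blocks agreement'' quietly conflates density with density below a member of the generic filter.
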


\begin{corollary} Let $n\in \omega\backslash \psi[g_\alpha]$. Then $S_{\alpha+n}$ remains stationary in $L^{\PP_{\omega_2}}$.
\end{corollary}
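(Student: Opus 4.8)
The plan is to show that if $n\in\omega\setminus\psi[g_\alpha]$, then no stage of the iteration below $\omega_2$ kills the stationarity of $S_{\alpha+n}$, and then conclude that $S_{\alpha+n}$ is stationary in $L^{\PP_{\omega_2}}$. The crucial observation is that the only mechanism in the iteration for shooting a club through the complement of a stationary set $S_{\alpha+n}$ is clause $(2)$ in the definition of $\QQ_\beta$ for limit $\beta\ge\omega_1$, where the component $c_k$ of a condition is required to be a closed bounded subset of $\omega_1\setminus\eta$ disjoint from $S_{\beta+k}$. Writing $\beta=\omega_1\cdot\nu+\omega\cdot\eta_\beta$, this only introduces clubs disjoint from $S_{\beta+k}$ for those $k$ that ever enter $\psi[g_\beta]$ for the generic permutation $g_\beta$. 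In particular, at the relevant stage where $\beta+k=\alpha+n$, Corollary~\ref{nonkill0} (more precisely, the "more precise" clause: the subposet below $\bar p$ is $\omega_1\setminus\bigcup_{m\in\psi[s]}S_{\beta+m}$-proper) guarantees that as long as $n\notin\psi[g_\alpha]$ the index $\alpha+n$ is never among the stationary sets through whose complement a club is shot at stage $\alpha$.

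First I would reduce to showing that $S_{\alpha+n}$ remains stationary after each single-step extension $L^{\PP_\beta}\subseteq L^{\PP_{\beta+1}}$ and at each limit. For the successor-of-successor and $\beta<\omega_1$ stages the iterand is Hechler forcing, which is $\sigma$-centered, hence proper, hence preserves stationary subsets of $\omega_1$; so these stages are harmless. For a limit stage $\beta$ with $\omega_1\le\beta<\omega_2$, the iterand $\QQ_\beta$ is $S$-proper by Corollary~\ref{nonkill0}, but this alone is not enough, since $S_{\alpha+n}$ need not be $\subseteq S$. Here I would invoke the sharper statement of Corollary~\ref{nonkill0}: for every condition $\bar p=\la\la s,F,s^*\ra,\la c_k,y_k\ra_{k\in\omega}\ra$, the poset below $\bar p$ is $(\omega_1\setminus\bigcup_{m\in\psi[s]}S_{\beta+m})$-proper. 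Since $n\notin\psi[g_\alpha]$, and $\psi[g_\alpha]=\bigcup\{\psi[s]:\exists F,s^*,\ldots\;\la\la s,F,s^*\ra,\ldots\ra\in G_\alpha\}$, a density argument shows that for any requirement to add a countable elementary submodel $\M$ with $\M\cap\omega_1\in S_{\alpha+n}$, one can find a condition $\bar p\in G_\beta$ with $n\notin\psi[\fin(\bar p)_0]$, whence the tail below $\bar p$ is $S_{\alpha+n}$-proper in the sense needed; this yields that $\QQ_\beta$ preserves the stationarity of $S_{\alpha+n}$.

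Next I would assemble these single-step facts into the full iteration via the standard preservation theorem for countable support iterations: a countable support iteration of $\bar T$-proper posets (for a fixed family $\bar T$ of stationary sets, here essentially $\{\omega_1\setminus S\}\cup\{\,\omega_1\setminus S_{\alpha+n}\,\}$, adapted along the iteration as in Corollary~\ref{nonkill0} and~\cite{VFSF}) preserves the stationarity of each set in $\bar T$; equivalently, the machinery already used to prove that $\PP_{\omega_2}$ is $S$-proper applies verbatim with $S$ replaced by $\omega_1\setminus S_{\alpha+n}$, using at each coordinate $\beta$ the localized properness below a condition avoiding $\alpha+n$ in its finite part. Combining with the fact that at no stage is a club deliberately shot through $\omega_1\setminus S_{\alpha+n}$ (clause $(2)$ only acts on indices $\beta+k$ with $k\in\psi[g_\beta]$, and the case $\beta+k=\alpha+n$ with $k=n$ is excluded by hypothesis, while for $\beta\ne\alpha$ the sets $S_{\beta+k}$ are almost disjoint from $S_{\alpha+n}$ so do not affect its stationarity), we conclude $S_{\alpha+n}$ is stationary in $L^{\PP_{\omega_2}}=L[G]$.

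The main obstacle I expect is the second step: making precise that the only threat to the stationarity of $S_{\alpha+n}$ comes from stage $\alpha$ with index $n\in\psi[g_\alpha]$, and that the almost-disjointness of the $S_\gamma$'s (from Lemma on $\bar S$) neutralizes all the clubs shot at other coordinates. One must check that a club disjoint from $S_{\beta+k}$, where $S_{\beta+k}$ is almost disjoint from $S_{\alpha+n}$, cannot contain a club disjoint from $S_{\alpha+n}$ — which is immediate since such a club meets $S_{\alpha+n}$ on a stationary set (indeed $S_{\alpha+n}$ minus a non-stationary set is still stationary). Once this bookkeeping is in place, the properness-style preservation argument is a routine adaptation of the proof of Corollary~\ref{nonkill0}.
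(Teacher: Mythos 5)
Your proposal is correct and follows essentially the same route as the paper: the paper simply fixes a single condition $p\in G$ forcing $n\notin\psi[g_\alpha]$ and observes that every restricted iterand $\QQ_\gamma(p(\gamma))$ of the cone $\PP_{\omega_2}(p)$ is $S_{\alpha+n}$-proper by the refined clause of Corollary~\ref{nonkill0} (for $\gamma\neq\alpha$ because $S_{\gamma+m}$ is almost disjoint from $S_{\alpha+n}$, and at $\gamma=\alpha$ because below $p$ no condition has $n\in\psi[s]$), so the whole cone is $S_{\alpha+n}$-proper and the stationarity is preserved. One small correction: the family of stationary sets witnessing properness should be $\{S,\,S_{\alpha+n}\}$ themselves, not the complements $\omega_1\setminus S$ and $\omega_1\setminus S_{\alpha+n}$ as written in your third paragraph, and the preservation at limits comes from this properness rather than from the step-by-step reduction you begin with.
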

\begin{proof} Let $G$ be $\PP_{\omega_2}$-generic over $L$ and let $p\in G$ such that $p\Vdash \beta\notin\{\alpha+n:n\in\psi[g_\alpha]\}$. Then $G$ is also $\PP_{\omega_2}(p)$-generic, where $\PP_{\omega_2}(p):=\{q:q\leq p\}$ is the countable support iteration of $\QQ_\gamma(p(\gamma))$ for $\gamma<\omega_2$.  However for every $\gamma$, the poset $\QQ_\gamma(p(\gamma))$ is $S_\beta$-proper and so the entire iteration is $S_\beta$-proper.
\end{proof}

\begin{lemma}\label{pi12definition} In $L[G]$ let $A=\{g_\alpha:\omega_1\leq\alpha<\omega_2,\alpha\;\hbox{limit}\}$. Then $g\in A$ if and only if for every countable suitable model $M$ of $ZF^-$ containing $g$ as an element there exists a limit ordinal $\bar{\alpha}<\omega_2^M$ such that $S^M_{\bar{\alpha}+k}$ is non-stationary in $M$ for all $k\in\psi[g]$.
\end{lemma}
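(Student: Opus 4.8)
The plan is to prove the two directions of the biconditional separately, using the coding machinery set up in the definition of $\QQ_\alpha$ together with the genericity lemmas. The overall strategy is: the forward direction uses that each $g_\alpha$ literally codes, via its almost-disjointness pattern against $\mathcal F$, the sets $C^\alpha_k$ and $Y^\alpha_k$, and these witness the required property in \emph{any} suitable model; the backward direction uses the fact that a permutation $g$ \emph{not} of the form $g_\alpha$ must (because $\mathcal G$ is a maximal cofinitary group, by the previous Lemma) be infinitely equal to some $g_\alpha$, and then a rank/absoluteness argument together with stationarity-preservation shows the coding property fails for $g$ in a carefully chosen $M$.

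For the forward direction, suppose $g = g_\alpha$ for some limit $\alpha$ with $\omega_1\le\alpha<\omega_2$, and let $M$ be a countable suitable model of $ZF^-$ with $g\in M$. Write $\xi=\omega_1^M$. The key point is that $g$ determines, inside $M$, the ordinal $\bar\alpha$ we want: by Lemma on properties of $Y^\alpha_k,C^\alpha_k,S^*$, for $m\in\psi[g]$ we have $|g\cap f_{\la\omega+m,\zeta\ra}|<\omega$ iff $Y^\alpha_m(\zeta)=1$, so $M$ can recover $Y^\alpha_m\rest\xi$ from $g$ and from $\mathcal F\cap M$ (which by Proposition 3 of \cite{SFLZ} is exactly $\{f_{\la\zeta,\nu\ra}:\zeta\in\omega\cdot2,\nu<\xi\}$). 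Clause (3) in the definition of $\QQ_\alpha$ guarantees $Y^\alpha_m$ codes $\eta+X_\alpha$ in the relevant sense, and clause (4) says precisely that along a club of $\xi'<\omega_1$ the restriction $y_k\rest\xi'$ is seen by every suitable model containing it to code a limit ordinal $\bar\alpha$ with $S_{\bar\alpha+k}$ non-stationary. Pushing this through for $M$: one argues that $\xi$ itself is a limit point of $C^\alpha_m$ (using density, Lemma~\ref{domain0}, to ensure $\dom(Y^\alpha_m)=\omega_1$ and $C^\alpha_m$ club), that $C^\alpha_m\cap\xi, Y^\alpha_m\rest\xi\in M$, and hence that $M$ satisfies ``$Y^\alpha_m\rest\xi$ codes a limit ordinal $\bar\alpha$ with $S^M_{\bar\alpha+m}$ non-stationary.'' By Lemma on the coding formula $\phi$, this $\bar\alpha$ is the same for all $m$ (it equals the collapse of $\alpha$), so a single $\bar\alpha<\omega_2^M$ works for all $k\in\psi[g]$.

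For the backward direction, suppose $g\notin A$; I want to produce a countable suitable $M\ni g$ where the property fails. Since $\mathcal G$ is a maximal cofinitary group and $g\notin\mathcal G$ in fact $g\notin A$, there are two cases: either $\la\{g\}\cup\{g_\alpha\}_\alpha\ra$ is not cofinitary, in which case some good word $w$ in $g$ and finitely many $g_\beta$ has infinitely many fixed points — this is a $\Sigma^1_1$ fact with real parameters $g$ and the relevant $g_\beta$'s, hence reflects to any countable $M$ containing those parameters, and one checks such $M$ cannot satisfy the coding property (the witnesses $C,Y$ simply do not exist); or the group is cofinitary but then by the Generic Hitting Lemma $g$ is infinitely equal to some genuine $g_\alpha$, and since $g\ne g_\alpha$ there is $n$ with $g(n)\ne g_\alpha(n)$, so $g$'s almost-disjointness pattern against $\mathcal F$ differs from $g_\alpha$'s, and in particular for cofinally many indices $g$ agrees with no $f\in\mathcal F$ in the prescribed way. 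Concretely, pick any $n\in\omega\setminus\psi[g]$ that ``should'' have been hit — by the Corollary above, $S_{\alpha+n}$ is stationary in $L[G]$ for $n\notin\psi[g_\alpha]$, and one arranges, via absoluteness of stationarity-preservation and a Löwenheim–Skolem argument, a countable suitable $M$ with $g\in M$ in which every candidate ordinal $\bar\alpha$ fails: for each such $\bar\alpha$ the set $S^M_{\bar\alpha+k}$ is forced to remain stationary in $M$ for some $k\in\psi[g]$. The contrapositive thus delivers $g\in A$.

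The main obstacle is the backward direction, specifically ensuring that a ``wrong'' permutation $g$ cannot accidentally satisfy the coding property in some exotic suitable model. The delicate point is that the coding property only quantifies over suitable models containing $g$, so one must show that the \emph{true} coding information (which $g_\alpha$ carries and an arbitrary $g$ does not) is robust: if $g$ is not one of the generators then its fixed-point structure or its intersection pattern with $\mathcal F$ is incompatible, in every suitable $M$, with there being a uniform $\bar\alpha$ whose associated stationary sets $S_{\bar\alpha+k}$ are killed — and here one leans essentially on (i) the maximality of $\mathcal G$ to force $g$ into ``infinitely-equal'' position with some $g_\alpha$, (ii) the $\Sigma^1_1$-absoluteness of the non-cofinitary alternative, and (iii) the Corollary that $S_{\alpha+n}$ stays stationary precisely when $n\notin\psi[g_\alpha]$, together with the coherence of $\bar S$ across suitable models (the Lemma on $\bar S$) to transfer this stationarity downward to $M$. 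Assembling these into a single model $M$ that simultaneously refutes every candidate $\bar\alpha$ is the crux of the argument.
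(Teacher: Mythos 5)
Your forward direction is essentially the paper's argument: $M$ recovers $C^\alpha_k\cap\omega_1^M$ and $Y^\alpha_k\rest\omega_1^M$ from $g$ and $\mathcal F\cap M$ via the almost-disjointness pattern, a condition $\bar p\in G$ realizes these as its $c_m,y_m$, and clause (4) of the definition of $\QQ_\alpha$ then hands $M$ the ordinal $\bar\alpha$, which is independent of $m$. That part is fine.

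The backward direction, however, has a genuine gap, and the route you chose is the wrong one. The right-hand side of the lemma says nothing about cofinitariness, about words with infinitely many fixed points, or about the intersection pattern of $g$ with $\mathcal F$; it is purely a statement about which sets $S_{\bar\alpha+k}$ are non-stationary in suitable models containing $g$. Your case analysis (non-cofinitary alternative via $\Sigma^1_1$ absoluteness; otherwise ``infinitely equal to some $g_\alpha$'' via Generic Hitting) therefore never connects to the statement you must refute: a model $M$ could in principle satisfy ``there is $\bar\alpha$ with all $S^M_{\bar\alpha+k}$ non-stationary'' regardless of whether the witnesses $C,Y$ exist or whether $g$ generates a cofinitary group with $\mathcal G$. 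Moreover the Generic Hitting Lemma is a statement about a function $h$ living in some intermediate model $L^{\PP_\alpha}$ being hit by the \emph{next} generic; it does not apply to an arbitrary $g$ in the final model, and your second case also overlooks that $g$ could lie in $\mathcal G\setminus A$ (a product of generators). Finally, the step you yourself flag as the crux --- assembling a single countable $M$ that refutes every candidate $\bar\alpha$ --- is exactly what is left unproved. The paper avoids all of this by arguing directly, not contrapositively: if the coding property holds for all countable suitable models containing $g$, then by L\"owenheim--Skolem it holds for arbitrary suitable models, in particular for $M=L_\Theta[G]$, where $\omega_2^M=\omega_2^L$, $\bar S^M=\bar S$, and stationarity is computed correctly. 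This produces a genuine limit $\alpha<\omega_2$ with $S_{\alpha+k}$ non-stationary in $L[G]$ for all $k\in\psi[g]$. The Corollary on stationarity preservation says the only $S_\beta$ destroyed at stage $\alpha$ are those with $\beta\in\{\alpha+k:k\in\psi[g_\alpha]\}$, so $\psi[g]\subseteq\psi[g_\alpha]$; since $\psi$ is a bijection this gives $g\subseteq g_\alpha$ as sets of pairs, and as both are total, $g=g_\alpha$. That final identification, which is where the coding actually pays off, is entirely absent from your proposal.
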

\begin{proof} The proof is analogous to that of~\cite[Lemma 13]{SFLZ}.
Let $g\in A$. Find $\alpha <\omega_2$ such that $g=g_\alpha$, and let $M$ be a countable suitable model containing $g$ as an element. Then
$C^\alpha_k\cap\omega_1^M$, $Y^\alpha_k\rest \omega_1^M$ are elements of $M$ for all $k\in \psi[g_\alpha]$.  Fix any $m\in\psi[g_\alpha]$. Then there is $\bar{p}=\la \la s, F,s^*\ra,\la c_k, y_k\ra_{k\in\omega}\ra\in G$ such that
$m\in\psi[s]$ and $C_m^\alpha\cap \omega_1^\M = c_m$, $Y_m^\alpha\cap\omega_1^\M = y_m$. By definition of being a condition we obtain that
$$\M\vDash Y_m^\alpha\cap \omega_1^\M\;\hbox{codes a limit ordinal }\bar{\alpha}_m\;\hbox{such that}\; S_{\bar{\alpha}_m+m}\;\hbox{is not stationary}.$$
Note that for every distinct $m_1, m_2$ in $\psi[g_\alpha]$ we have that $Y_{m_1}^\alpha\cap\omega_1^\M = Y_{m_2}^\alpha\cap\omega_1^\M$, and so $\bar{\alpha}_m$ does not depend on $m$.

To see the other implication, fix $g$ such that for every countable suitable model containing $g$ as an element there
exists $\bar{\alpha}<\omega_2^M$ such that $S^M_{\bar{\alpha}+k}$ is non-stationary in $M$ for all $k\in\psi[g]$. By the L\"owenheim-Skolem theorem
the same holds for arbitrary suitable models of $ZF^-$ containing $g$. In particular this holds in $M=L_\Theta[G]$ for some sufficiently large $\Theta$,
say $\Theta>\omega_{100}$. Then $\omega_2^M=\omega_2^{L[G]}=\omega_2^{L}$, $\bar{S}^M=\bar{S}$, and the notions of stationarity of subsets of $\omega_1$ coincide in $M$ and $L[G]$. Thus there is a limit ordinal $\alpha<\omega_2$ such that $S_{\alpha+k}$ is non-stationary for all $k\in\psi[g]$. By the above corollary for every $\beta\notin \{\alpha+ k:k\in\psi[g_\alpha]\}$ the set $S_\beta$ is stationary. Therefore $\psi[g]\subseteq\psi[g_\alpha]$ and so $g=g_\alpha$.
\end{proof}

Thus as the right-hand side of the equivalence stated in Lemma~\ref{pi12definition} is $\Pi^1_2$, we obtain:

\begin{theorem} There is a generic extension of the constructible universe in which $\mathfrak{b}=\mathfrak{c}=\aleph_2$ and there is a maximal cofinitary group with a $\Pi^1_2$-definable set of generators.
\end{theorem}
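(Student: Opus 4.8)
The plan is to derive the theorem by assembling the lemmas of this section with the standard cardinal arithmetic of the iteration. Fix a $\PP_{\omega_2}$-generic filter $G$ over $L$; I will argue that $L[G]$ is the desired model. First I would record cardinal preservation. By Corollary~\ref{nonkill0}, $\PP_{\omega_2}$ is $S$-proper, hence proper, hence preserves $\aleph_1$; to deal with cardinals $\geq\aleph_2$ I would check by induction on $\alpha$ that $L^{\PP_\alpha}\vDash\mathrm{CH}$ and that $\dot\QQ_\alpha$ is forced to have a dense subset of size $\leq\aleph_1$. This is clear for Hechler forcing, and for a limit stage $\alpha\geq\omega_1$ a condition of $\QQ_\alpha$ is determined by its finite part together with an $\omega$-sequence $\la c_k,y_k\ra_{k\in\omega}$ of closed bounded subsets of $\omega_1$ and bounded $0,1$-valued functions on $\omega_1$, of which there are only $\aleph_1$ many under CH. A countable support iteration of length $\leq\omega_2$ of proper posets of size $\leq\aleph_1$ over a model of CH satisfies the $\aleph_2$-chain condition, which together with properness preserves all cardinals and cofinalities; a nice-name count then gives $L[G]\vDash\mathfrak c\leq\aleph_2$.

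Next I would establish $\mathfrak b=\aleph_2$. Hechler forcing for a dominating real is inserted at every successor stage, so dominating reals are added cofinally in $\omega_2$. Given any family $\{h_\xi:\xi<\omega_1\}$ of reals of $L[G]$, the $\aleph_2$-chain condition places it inside $L^{\PP_\alpha}$ for some $\alpha<\omega_2$, and the Hechler real of $\dot\QQ_{\alpha+1}$ dominates every $h_\xi$. Hence no family of size $\aleph_1$ is unbounded, so $\mathfrak b\geq\aleph_2$; combined with $\mathfrak b\leq\mathfrak c\leq\aleph_2$ this yields $L[G]\vDash\mathfrak b=\mathfrak c=\aleph_2$.

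Then I would invoke the lemma above stating that $\mathcal G=\la g_\alpha\ra_{\alpha\in\Lim(\omega_2\backslash\omega_1)}$ is a maximal cofinitary group in $L[G]$, with generating set exactly $A=\{g_\alpha:\omega_1\leq\alpha<\omega_2,\ \alpha\ \text{limit}\}$, together with Lemma~\ref{pi12definition}: $g\in A$ if and only if every countable suitable model $M$ of $ZF^-$ containing $g$ admits a limit $\bar\alpha<\omega_2^M$ with $S^M_{\bar\alpha+k}$ non-stationary in $M$ for all $k\in\psi[g]$. Coding countable well-founded models by reals, the statement "$e$ codes a well-founded model" is $\Pi^1_1$, while "$M$ is suitable", "$g\in M$", "$k\in\psi[g]$", "$\bar\alpha<\omega_2^M$ is a limit", and "$S^M_{\bar\alpha+k}$ is non-stationary in $M$" (witnessed by a club that is itself an element of $M$) are arithmetic in the relevant reals; so the displayed condition has the form $\forall e\,(\Pi^1_1\to\exists(\text{real})(\text{arithmetic}))$, i.e.\ $\Pi^1_2$. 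Therefore $A$ is a $\Pi^1_2$-definable set of generators of the maximal cofinitary group $\mathcal G$, completing the proof.

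The substance of the argument --- already carried out in Corollary~\ref{nonkill0} and the lemmas supporting it --- is the preservation of $S$-properness through the countable support iteration; I expect the only genuinely new bookkeeping at this point to be the inductive verification that CH persists at each stage, so that the $\aleph_2$-chain condition and the continuum computation apply.
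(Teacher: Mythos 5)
Your proposal is correct and follows essentially the same route as the paper: the paper derives the theorem directly from Corollary~\ref{nonkill0}, the maximality lemma, and Lemma~\ref{pi12definition}, leaving the cardinal arithmetic (CH at each stage, the $\aleph_2$-c.c., and the cofinal Hechler reals giving $\mathfrak b=\mathfrak c=\aleph_2$) and the $\Pi^1_2$ complexity count implicit, exactly the standard details you supply.
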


\section{Remarks}

We expect that the techniques of~\cite{VFSFLZ} can be modified to produce a generic extension of the constructible universe in which $\mathfrak{b}=\mathfrak{c}=\aleph_3$ and there is a maximal cofinitary group with a $\Pi^1_2$-definable set of generators. Of interest remains the following question: Is it consistent that there is a $\Pi^1_2$ definable maximal cofinitary group and $\mathfrak{b}=\mathfrak{c}=\aleph_2$?

\end{document}